\documentclass[11pt]{amsart} 
\usepackage[margin=1.5in]{geometry}

\usepackage{amssymb,upgreek}
\usepackage{amsmath}
\usepackage{url}
\usepackage{bm}
\usepackage{censor}
\usepackage[pagebackref]{hyperref}

\usepackage{enumerate}
\usepackage{xcolor}

\usepackage{tikz}
\usepackage{tikz-cd}

\usepackage{color}
\usepackage{mathtools} 
\usepackage{caption}
\usepackage{subcaption}
\usepackage{fancyhdr}
\usepackage{mathrsfs}
\newtheorem{theorem}{Theorem}[section] 
\newtheorem{lemma}[theorem]{Lemma}
\newtheorem{conjecture}[theorem]{Conjecture}
\newtheorem{proposition}[theorem]{Proposition}
\newtheorem{corollary}[theorem]{Corollary}
\newtheorem*{theorem*}{Theorem}
\newtheorem*{lemma*}{Lemma}
\newtheorem*{corollary*}{Corollary}

\theoremstyle{definition}
\newtheorem{definition}[theorem]{Definition}
\newtheorem{remark}[theorem]{Remark}

\newtheorem{example}[theorem]{Example}
\newtheorem{question}[theorem]{Question}

\newtheorem*{remark*}{Remark}
\newtheorem*{definition*}{Definition}
\newtheorem*{remarks*}{Remarks}
\newtheorem*{addenda*}{Addenda}

\newcommand{\cptwo}{\mathbb{CP}^2}

\newcommand{\LA}{\mathcal L}
\newcommand{\Z}{\mathbb Z}
\newcommand{\C}{\mathbb C}
\newcommand{\A}{\mathbb A}
\newcommand{\Proj}{\textrm{Proj}}
\newcommand{\Aff}{\textrm{Aff}}

\title[Anti-admissible local systems]
{The opposite of admissibility for\\
local systems on line arrangements }

\author[R. Harris]{Robert Harris}
\address{D\'epartement de Math\'ematiques, 
Universit\'e du Qu\'ebec \`a Montr\'eal, 
Montr\'eal, QC, H2X 3Y7, Canada}
\email{harris.robert@uqam.ca}

\date{May 8, 2026.  }

\subjclass[2020]{Primary 14N20, 32S22, 14E20; Secondary 05B30, 57M10}

\keywords{arrangements of lines, local systems, admissibility, smooth branched covers}

\begin{document}

\begin{abstract}
We study local systems on line arrangements for which no admissibility conditions are satisfied. In doing so we demonstrate the existence of these anti-admissible objects in both the finite and infinite setting and discuss their relation to the combinatorial data of arrangements. We also observe the connection between anti-admissibility and smooth branched coverings and how this connection is related to the branch set being minimal in one sense and maximal in another.
\end{abstract}

\maketitle

\section{Introduction}

Sitting at the core of the study of line arrangements is the question of what can and cannot be determined from the combinatorial data alone. Often this question focuses on the relations between the combinatorics of the arrangement and the topology of their complement. Notably, Rybnikov found that there are arrangements with the same incidence structure which have different fundamental groups \cite{rybnikovfund}. On the other hand, it was known even earlier that the cohomology of the complement (the Orlik-Solomon algebra $H^*(\cptwo\setminus\LA, \C)$) is completely determined by the incidence data \cite{orliksolomon}. This leads us to consider what other differences occur between the settings of homotopy and homology/cohomology. For instance, one might ask how much information is carried by a local system with non-abelian image compared to one with abelian image.   

A local system of rank $n$ on a topological space $Z$ is a homomorphism $\phi:\pi_1(Z)\rightarrow GL(n,\C)$ and historically, there has been a great deal of effort put in to understanding local systems in the setting of $Z$ being the complement of an algebraic curve $\mathcal C\subset Y$. In the context of line arrangements, i.e., $C=\LA=L_1\cup\dots\cup L_n$ and $Y$ being either $\C^2$ or $\cptwo$ (in which case we will call $\phi$ a local system on $\LA$) some important and historically relevant results can be found in the seminal work of Falk \cite{falkcohomology} in which they defined resonance varieties, or in the various works of Libgober and Yuzvinsky \cite{libgoberlocal},\cite{libgoberresults} in which they provided significant elaboration on such objects. This paper however, will instead follow a slightly different route and take inspiration from a notion called admissibility (cf.~\cite{Marco} or Definition~\ref{def: admissible} below) in addition to the well known connection between maps from the fundamental group and branched covers (for the former we refer the reader to \cite{Marco}, \cite{nazirrazaadmissible} and \cite{ngyuenadmissible} whereas for the latter we suggest \cite{hiro93}, \cite{hir83}, \cite{libgobercovers} or \cite{Suciu01}, to name a few).  

Specifically, we strive to further discuss the relation between the combinatorics of line arrangements and algebraic properties of the complement by defining an opposite notion to admissibility, which we call anti-admissibility (Definition~\ref{def: elementary anti-admissible}) in addition to proving various results using this new definition, including. 

\begin{theorem}(a combination of Propositions~\ref{prop: affine integers} and \ref{prop: projective integers} below)

\noindent
      Let $\LA=\{L_1,\dots,L_n\}$ be an arrangement in $Y=\C^2$ (or a non-pencil arrangement in $\cptwo$) for which $H_1(Y\setminus\LA;\Z)$ has rank at least 2. Then $\LA$ is $\Z^k$-anti-admissible for all $2\leq k\leq n$ (or $2\leq k\leq n-1$ in the projective setting).
\end{theorem}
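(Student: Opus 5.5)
The plan is to reduce the statement to a lattice-point problem in $H_1$ and solve it by a genericity argument. Since the target group $\Z^k$ is abelian, every homomorphism $\pi_1(Y\setminus\LA)\to\Z^k$ factors through $H_1(Y\setminus\LA;\Z)$.

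\textbf{Affine case.} Here $H_1(\C^2\setminus\LA;\Z)\cong\Z^n$ is free on the meridians $m_1,\dots,m_n$. So a homomorphism is the same thing as a choice of vectors $v_i=\phi(m_i)\in\Z^k$, and surjectivity means the $v_i$ generate $\Z^k$. I am assuming that Definition~\ref{def: elementary anti-admissible} requires the negation of every admissibility condition of Definition~\ref{def: admissible}. I read these, for a rank-one or abelian target, as the local conditions at the lines and at the multiple points. Then anti-admissibility becomes a finite list of conditions on the $v_i$: each $v_i\neq 0$, and for each multiple point $p$ the local sum $\sum_{L_i\ni p}v_i$ avoids the forbidden value. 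The whole condition is therefore a finite list of open conditions, each a complement of a proper affine-linear subset, together with one integral condition, surjectivity.

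\textbf{Construction.} First take $v_1,\dots,v_k$ to be the standard basis $e_1,\dots,e_k$, which secures surjectivity; this is where $k\le n$ is used. Then choose the remaining $v_{k+1},\dots,v_n$ generically, for instance $v_j=N^{j}(1,1,\dots,1)$ or $v_j=(N^{j},N^{2j},\dots)$ for $N$ large. This avoids each of the finitely many bad loci $\{\sum_{i\in S_p}v_i=\text{forbidden}\}$. If a point $p$ involves only lines among $L_1,\dots,L_k$, the local sum is a $0/1$ vector with at least two nonzero entries, and I check the condition directly. If necessary, I permute which lines receive the basis vectors, or use $e_1,\dots,e_{k-1}$ together with $e_1+e_k$ so that no point-sum degenerates. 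The hypothesis that $H_1$ has rank $\ge 2$ is what allows $k\ge2$; it ensures there is enough room that two independent directions can be separated.

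\textbf{Projective case.} Here $H_1(\cptwo\setminus\LA;\Z)=\Z^n/\langle\sum m_i\rangle\cong\Z^{n-1}$, so the $v_i$ must satisfy $\sum_{i=1}^n v_i=0$. Accordingly I take $v_1,\dots,v_{n-1}$ freely, as above, and set $v_n=-\sum_{i<n}v_i$; this explains the bound $k\le n-1$. The non-pencil hypothesis enters at this step. In a pencil, every line passes through one point $p_0$, so its local sum is $\sum_i v_i=0$, which is forced and violates anti-admissibility. Without a pencil, no point contains all the lines, so every local sum $\sum_{i\in S_p}v_i$ involves a proper subset $S_p$. Such a sum is a nonconstant function of the free parameters, so genericity again applies. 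Lines through $L_n$ need the same care, since for $p\in L_n$ the local sum equals $-\sum_{i\notin S_p,\, i<n}v_i$, which is again a proper nonempty subsum.

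\textbf{Main obstacle.} I expect the hardest part to be reconciling genericity with integrality and surjectivity. I must show that, after fixing part of the $v_i$ to guarantee that they generate $\Z^k$, the remaining free integer parameters still make every local condition nonconstant. This is the projective case with small $k$ and few free vectors, for instance $k=n-1$, where all $v_i$ are essentially determined up to $GL(k,\Z)$. For that extreme case I would argue directly. I would take $v_i=e_i$ for $i<n$ and $v_n=-\sum e_i$. Then a local sum over a proper subset $S_p$ is zero only if $S_p=\emptyset$, and is otherwise a nonzero vector. I would then verify this against the precise form of the forbidden values in the definition.
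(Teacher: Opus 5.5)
There is a genuine gap: you never use the actual condition of Definition~\ref{def: elementary anti-admissible}. It is not the logical negation of Definition~\ref{def: admissible}, and it is not a condition on the local sum alone. For every $p\in\textrm{Sing}(\LA)$ and \emph{every} line $L_{i_l}$ through $p$, the vectors $v_{i_l}$ and $s_p=\sum_{L_i\ni p}v_i$ must be linearly independent, i.e.\ $v_{i_l}\neq 0$ and $s_p\notin\mathbb{Q}v_{i_l}$. At a double point $L_i\cap L_j$ this says that $v_i$ and $v_j$ are non-proportional. So the bad loci are the determinantal sets $\{v_{i_l}\wedge s_p=0\}$, not affine-linear ones. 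Your checks (``$s_p$ avoids a forbidden value'', and only $s_p\neq 0$ in the case $k=n-1$) therefore do not establish anti-admissibility.

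This is not cosmetic. Your suggested choice $v_j=N^j(1,\dots,1)$ passes your test, but it makes all free images proportional. It therefore violates the condition at any point through which only free lines pass, e.g.\ a double point of two free lines. Likewise, in the extreme projective case you defer exactly the checks where the hypotheses enter. For $p\in L_n$ you must show that $e_l$ is independent of $s_p=-\sum_{i<n,\,i\notin S_p}e_i$, and that $v_n=-\sum_{i<n}e_i$ is independent of $s_p$.

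The genericity strategy can be repaired, but the repair is the actual content of the proof. For instance, for $p\in L_n$ the pair $\{v_n,s_p\}$ is dependent iff $\sum_{i\in S_p\setminus\{n\}}v_i$ and $\sum_{i<n,\,i\notin S_p}v_i$ are proportional. These are sums over disjoint index sets, nonempty because $r\geq 2$ and $\LA$ is not a pencil. You must check that this condition, and every other condition $v_{i_l}\wedge s_p=0$, is not identically satisfied once $v_1,\dots,v_k=e_1,\dots,e_k$ are fixed. Only then can you pick integer points off a finite union of proper hypersurfaces.

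The paper sidesteps the quadratic nature of the condition by normalizing a coordinate. With $\phi(\mu_i)=(1,q_i)$, dependence of $(1,q_{i_l})$ and $(r,\sum_m q_{i_m})$ becomes the linear equation $q_{i_l}=\frac{1}{r}\sum_m q_{i_m}$. It then suffices to choose each $q_{i+1}$ inductively outside a finite set $A_{i+1}$ of rational solutions. In $\cptwo$ the paper sets $\phi(\mu_n)=(-(n-1),-\sum_{i<n}q_i)$ and enlarges the avoided sets (notably $A_{n-1}$) by the equations coming from points on $L_n$. For $k\geq 3$ it replaces some images by standard basis vectors.
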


Furthermore, by considering various cases for the combinatorial data of the arrangement, we also provide results for finite groups.

\begin{theorem}(see also Theorem~\ref{theorem: affine mod p} below)
    Let $\LA=\{L_1,\dots,L_n\}$ be a line arrangement in $\C^2$. Then for all primes $p\geq n-1$, $\LA$ is $(\Z/p\Z)^2$-anti-admissible. 
\end{theorem}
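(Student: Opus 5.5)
The plan is to build the local system by hand as a rank one (abelian) character. Since $H_1(\C^2\setminus\LA;\Z)\cong\Z^n$ is freely generated by the meridians $\gamma_1,\dots,\gamma_n$ of $L_1,\dots,L_n$, any homomorphism $\pi_1(\C^2\setminus\LA)\to(\Z/p\Z)^2$ factors through $H_1$. Such a homomorphism is therefore given by an arbitrary choice of images $a_i=\phi(\gamma_i)\in\mathbb F_p^2$, and in the affine setting no global relation among the $a_i$ is imposed.

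I am reading Definition~\ref{def: elementary anti-admissible} as follows: anti-admissibility asks that at every intersection point $P$ the local images are as independent as possible, which is what makes the associated branched cover smooth over $P$. Concretely, I will assume the requirements are:
\begin{enumerate}[(i)]
\item each $a_i$ is nonzero;
\item the $a_i$ generate the group;
\item for every multiple point $P$, any two $a_i,a_j$ with $L_i,L_j\ni P$ are linearly independent, so that no admissibility-type cancellation occurs locally.
\end{enumerate}
It is then enough to choose the $a_i$ pairwise independent for all pairs $i\neq j$, irrespective of the combinatorics. This makes the argument uniform over all arrangements, which matches the form of the statement.

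For the construction, pairwise independence of nonzero vectors in $\mathbb F_p^2$ means that they span distinct lines, i.e.\ they give distinct points of $\mathbb P^1(\mathbb F_p)$, a set with $p+1$ elements. I set
\[ a_i=(1,i-1)\ \text{ for } 1\le i\le n-1,\qquad a_n=(0,1). \]
The slopes $0,1,\dots,n-2$ are distinct in $\mathbb F_p$ exactly when $n-1\le p$, which is precisely the hypothesis $p\geq n-1$, and $a_n$ is the point at infinity. Hence all pairs are independent, and in particular $a_1,a_n$ already generate $(\Z/p\Z)^2$, so $\phi$ is surjective. The degenerate case $n=1$ is ruled out by the generation requirement, and I expect it is excluded by the standing hypotheses.

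The step I expect to be the main obstacle is verifying the exact local condition of Definition~\ref{def: elementary anti-admissible} at points of multiplicity $m\ge3$. If the definition additionally requires that the sum $\sum_{L_i\ni P}a_i$, i.e.\ the image of the loop around $P$, be nonzero or avoid some specific values, then the explicit choice above may fail at some triple point. In that case my first attempt is to rescale: replace $a_i$ by $\lambda_i a_i$ with $\lambda_i\in\mathbb F_p^\times$, which preserves pairwise independence. I would then count, for each multiple point, the bad choices of the $\lambda_i$ (each such condition is a single linear equation in the $\lambda$'s) and show that the total number of bad choices is smaller than $(p-1)^n$. Should the counting be too weak, I would fall back on a case split according to whether $\LA$ has a point of multiplicity $n$ (a pencil) or $n-1$, since those are the only configurations where the bound $p\ge n-1$ is tight.
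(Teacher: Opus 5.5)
Your proposal misreads condition~(2) of Definition~\ref{def: elementary anti-admissible}, which the paper states explicitly. At each point $p_{i_1,\dots,i_r}$, every $\phi(\mu_{i_k})$ must be linearly independent of the \emph{local sum} $s=\sum_{j=1}^r\phi(\mu_{i_j})$, which is the image of the meridian $\varepsilon_{i_1,\dots,i_r}$ of the exceptional curve (cf.\ Lemma~\ref{lemma: kulikov smoothness}(ii)). It is not a requirement that the $\phi(\mu_{i_j})$ be pairwise independent. The two conditions agree only at double points (Remark~\ref{rem: double points LI}); at points of multiplicity $r\ge 3$ pairwise independence is neither sufficient nor necessary. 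Your explicit assignment fails as soon as $L_1,L_2,L_3$ are concurrent: $(1,0)+(1,1)+(1,2)=3(1,1)$, and for a pencil of three lines $(1,0)+(1,1)+(0,1)=2(1,1)$. So $s$ is either $0$ or a nonzero multiple of the middle image. The ``obstacle'' you flag at the end is therefore not a possible extra requirement; it is the whole content of the definition. Your slope argument only covers arrangements all of whose singular points are double points (essentially Example~\ref{ex: spectrum GP}).

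The rescaling fallback cannot repair this, for three reasons.
\begin{itemize}
\item Rescaling keeps all images on pairwise distinct lines of $\mathbb F_p^2$. But Lemma~\ref{lemma: triple points p=3} shows that for $p=3$ any anti-admissible map must give two lines through a triple point \emph{equal} images. So three concurrent lines plus a transversal ($n=4$, $p=3=n-1$) is out of reach of your method, although $(1,0),(1,0),(0,1),(1,1)$ is anti-admissible.
\item For larger $p$ the union bound is too weak. Each condition at a point with $r_P\ge 3$ excludes up to a $1/(p-1)$ fraction of the $\lambda$'s, so you only get $p-1>\sum_{r_P\ge 3}r_P$. This is of order $n^2$ for arrangements with many triple points.
\item The sharpness of $p\ge n-1$ comes from $\mathcal G_n$ (Example~\ref{example: general position k=2}), which has only double points, not from pencils.
\end{itemize}

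The missing idea is the paper's greedy, order-dependent construction $\phi(\mu_i)=c_ig_{m_i}$.
\begin{itemize}
\item First choose $\langle g_{m_i}\rangle$ to avoid the at most $i-1$ lines spanned by the partial sums $S_q=\sum_{L\in\LA_{i-1},\,q\in L}\phi(\mu_L)$ at the points $q\in L_i$. This makes $\phi(\mu_i)$ independent of the final sum wherever $L_i$ is the last line through $q$.
\item Then observe that $c\mapsto\langle cg_{m_i}+S_q\rangle$ is injective. So each $q$ forbids at most $r_q-1$ values of $c_i$, where $r_q$ is the multiplicity of $q$ in $\LA_i$, and at most $i-1$ values are forbidden in total.
\end{itemize}
This gives the result for $p>n$. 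The paper reaches $p\ge n-1$ by ordering the lines so that the last two have slack (double points, parallel lines). It also deliberately gives some early lines \emph{equal} images, e.g.\ $\phi(\mu_1)=\phi(\mu_2)=g_1$, which is exactly what your framework forbids.

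Note also that at $n=3$, $p=2$ the statement as written fails for three concurrent lines (Lemma~\ref{lemma: triple points bad}). Any correct proof must exclude that case, and the paper's ``easily checked'' for $n\le 4$ overlooks it.
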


Finally, we show how this concept of anti-admissibility fits into the current understanding of coverings branched over line arrangements and we use these observations in conjunction with known correspondences to define anti-admissibility for any local system with finite image (Definition~\ref{def: G anti admissibility}).

\subsection*{Organization} 
In \S\ref{sec:background} we review some standard facts regarding line arrangements, and define anti-admissibility, which is the primary algebraic structure considered in this paper. In \S\ref{sec: affine admissibility}, we provide various results on anti-admissibility, specified to the case when a line arrangement is contained in $\C^2$. In \S\ref{sec: projective admissibility} we show how the results in \S\ref{sec: affine admissibility} behave when the ambient space changes to the complex projective plane. Lastly, in \S\ref{section: branched covers} we review some basic material from \cite{hiro93} on branched coverings before discussing the application of such branched coverings to the study of certain local systems.

\section{Admissibility and anti-admissibility}\label{sec:background}
We begin by fixing some notation and recalling some facts that will be useful throughout this paper.

All homology groups discussed we be assumed to have coefficients in $\Z$. Furthermore, for $x\in\mathbb R$, we will let $\lceil x\rceil$ denote minimum integer which is at least $x$ and $\lceil x\rceil_P$ will specifically denote the minimum \emph{prime} which is at least $x$.

In general, $\LA=\{L_1,\dots,L_n\}$ will denote a line arrangement of $n$\/ complex lines in $Y=\C^2$ (or $\cptwo$) and in the projective setting any arrangement will be assumed to not be a pencil. When viewed in the context of a subset (or subvariety), we will abuse notation and also write $\LA=L_1\cup\dots\cup L_n\subset Y$. 

It is well known that if $\mu_i\in H_1(Y\setminus \LA)$ denotes the (positively oriented) meridian homology class of $L_i\in \LA$ (which may also be denote by $\mu_{L_i}$ at times), then the only relations among the $\mu_i$ are the commutators and possibly $\sum_{i}\mu_i$. Specifically, it is known that
\begin{equation*}
    H_1(\C^2\setminus \LA)\cong
    \langle\mu_1,\mu_2,\dots, \mu_n\mid [\mu_i,\mu_j] \rangle\cong \Z^{n}
\end{equation*}
and
\begin{equation*}
     H_1(\cptwo\setminus \LA)\cong \langle\mu_1,\mu_2,\dots, \mu_n\mid [\mu_i,\mu_j],\mu_1+\mu_2+\cdots+\mu_n \rangle \cong \Z^{n-1} 
\end{equation*} for affine and projective arrangements, respectively (for instance, see ~\cite{Suciu01} or \cite{hiro93}).

The set of points that belong to at least two lines will be denoted by $\textrm{Sing}(\LA)$ and we say an intersection point 
\begin{equation*}
p=p_{i_1,i_2,\ldots,i_r}=L_{i_1}\cap L_{i_2} \cap \cdots \cap L_{i_r} \in\textrm{Sing}(\LA)
\end{equation*}
of $\LA$ has multiplicity $r$ when $r$ is the maximum number of lines in $\LA$ passing through $p$ (we may always assume $i_1\leq\dots\leq i_r$). The value $t_{r}$\/ will refer to the number of intersection points of $\LA$ with multiplicity $r$. Since no two lines can intersect more than once, it is clear that we must have $|\textrm{Sing}(\LA)|\leq n(n+1)/2$.

We now recall what it means for an arrangement to be $k$-admissible.

\begin{definition}\label{def: admissible} 
    {\rm (cf.~Definition~2 in \cite{Marco})}
    For integers $k\geq 2$, a surjection $\phi:H_1(Y\setminus \LA)\rightarrow \Z^{k-1}$ is $k$\emph{-admissible} if 
    \begin{enumerate}
        \item  $\phi(\mu_i)\neq 0$ for all $L_i\in \LA$;
        \item for all $p_{i_1,i_2,\ldots,i_r}\in \textrm{Sing}(\LA)$ the pairs 
    $$\{\phi(\mu_{i_k}),\sum_{j\in \{i_1,i_2,\ldots,i_r\}}\phi(\mu_j)\}$$ 
    are linearly dependent for all $k=1,\dots,r$.
    \end{enumerate}
    Moreover, we say an arrangement $\LA$ is $k$-admissible if it admits such a $k$-admissible surjection.
\end{definition} 

\begin{remark}\label{rem: codomain}
    We note that the codomain in the above definition can be changed to also include elementary abelian $p$-groups of the form $(\Z/p\Z)^k$ (or technically $(\Z/p\Z)^{k-1}$ when mimicking the above). In this context, we will refer to a surjection or arrangement as being $(k,p)$-admissible or $(\Z/p\Z)^{k-1}$-admissible.  
\end{remark}

Even when all meridians have non-trivial image, simply knowing an arrangement fails to be $k$ (or $(k,p)$)-admissible does not tell us how close a surjection can come to satisfying all the required linear dependence conditions.  That being said, restricting to the opposite extreme of Definition~\ref{def: admissible} has geometric significance (cf. Proposition~\ref{prop: main correspondence} below) and along with the above remark regarding the inclusion of $(\Z/p\Z)^k$ as a codomian justifies the following definition. 

\begin{definition}\label{def: elementary anti-admissible} 
    For integers $k\geq 2$ and prime $p\geq 2$, let $G$ be either $\Z^k$ or $(\Z/p\Z)^{k}$. A surjection $\phi:H_1(Y\setminus \LA)\rightarrow G$ is $G$\emph{-anti-admissible} if 
    \begin{enumerate}
        \item  $\phi(\mu_i)\neq 0$ for all $L_i\in \LA$;
        \item for all $p_{i_1,i_2,\ldots,i_r}\in \textrm{Sing}(\LA)$ the pairs 
    $$\{\phi(\mu_{i_k}),\sum_{j\in \{i_1,i_2,\ldots,i_r\}}\phi(\mu_j)\}$$ 
    are linearly \emph{independent} for all $k=1,\dots,r$.
    \end{enumerate}
    We further say an arrangement $\LA$ is $G$-anti-admissible if it admits such a $G$-anti-admissible surjection. When the context is clear we may drop the group and simply write anti-admissible.
\end{definition} 

\begin{remark}
    First, while the above definition has been written to mimic Definition~\ref{def: admissible} as close as possible, we note that condition (1) is effectively superfluous, as it follows from each $\phi(\mu_i)$ being one half of at least one linear independent pair (the only case where this is not true is for an affine arrangement consisting of only parallel lines).  It is also clear that $k\geq 2$ is (almost always) forced by linear independence as well (again, unless we are in this exceptional case of only parallel lines in $\C^2$). For these reasons, we will always assume that an arrangement $\LA$ consists of at least two lines which are not parallel so that $\textrm{Sing}(\LA)$ is non-empty (and at least three lines in the projective case so that the set of all surjections $\Z^{n-1}\rightarrow G$ is non-empty). 
    
    Due to the significance of property (2), we will sometimes refer to it as the anti-admissibility condition (typically at a given intersection point).

\end{remark}

\begin{example}\label{example: canonical}
(cf.~page 968 in \cite{kul04})
Let $\LA=\{L_1,\dots,L_n\}$\/ be any (non-pencil) line arrangement consisting of $n>2$\/ complex lines in $\cptwo$ and fix $G=\Z^{n-1}$ or $G=(\Z/p\Z)^{n-1}$ for some prime $p$. Consider the surjective homomorphism 
$$\phi : H_1(\cptwo \setminus\LA)\cong\Z^{n-1}\rightarrow G$$
given by 
\begin{equation*}\label{eq: canonical surjection}
    \phi(\mu_i)=
    \begin{cases}
        \mathbf{e}_i \quad &i\in \{1,\dots,n-1\},\\
        -\sum_{i<n}\mathbf{e}_i \quad &i=n,
    \end{cases}
\end{equation*}
where $\mathbf{e}_i$ is the $i$th standard basis vector of $G$.  
It is easy to see that for any possible intersection point $p_{i_1,i_2,\ldots,i_r}\in \textrm{Sing}(\LA)$, $\phi(\mu_{i_k})$ and $\sum_{j\in \{i_1,i_2,\ldots,i_r\}}\phi(\mu_j)$ will be linearly independent. Consequently, any arrangement with $n$ lines is both $\Z^{n-1}$ and $(\Z/p\Z)^{n-1}$-anti-admissible (for any prime p). 

A similar analysis also shows that any arrangement of $n\geq 2$ lines in $\C^2$ will be both $\Z^{n}$ and $(\Z/p\Z)^{n}$-anti-admissible (for any prime p).   
\end{example}

\begin{remark}\label{rem: small groups interesting}
By comparing the size of an arrangement to the rank of the abelian group $G$, we see that Example~\ref{example: canonical} contains the `largest' groups for which a fixed arrangement can be anti-admissible. Namely, if $k>n-1$ (or $k>n$ and $Y=\C^2$), then no surjection exists. Furthermore, the example shows that these extremal cases can be achieved for any arrangement and so in general the most interesting cases of $G$-anti-admissibility occur when one attempts to minimize the size of $G$. 
\end{remark}

We end this section with some observations which will be of use throughout the remainder of this paper.

\begin{remark}\label{rem: double points LI}
    We highlight that the anti-admissibility condition at a double point $p=L_i\cap L_j$ of an arrangement is equivalent to requiring that $\phi(\mu_i)$ and $\phi(\mu_j)$ be linearly independent. Importantly, this implies that the existence and the number of generic lines (lines which intersect each other line at a double point) in an arrangement places a restriction on what groups can appear (cf. Example~\ref{example: general position k=2} below).
    
    On the other hand, it is possible for the sum of $r\geq 3$ pairwise linearly independent elements to not be independent from one of them (consider $(1,1),(1,2)$ and $ (1,3)$ in $(\Z/7\Z)^2$), and so for arrangements containing points of higher multiplicity, other interesting restrictions may arise, albeit often on a case by case basis (for example,~Lemma~\ref{lemma: triple points bad}). 
\end{remark}

\begin{example}\label{example: general position k=2}
   
    As it will show up at various times in the following sections, we will let $\mathcal G_n$ specifically denote a line arrangement consisting of $n$ lines in general position. For a fixed prime $p$, suppose there exists a surjective homomorphism 
    $$\phi : H_1(Y \setminus \mathcal G_n)\rightarrow(\Z/p\Z)^{2}$$
    which is $(\Z/p\Z)^{2}$-anti-admissible. As per the above remark, this implies that any two elements from $\{\phi(\mu_i)\mid L_i\in \mathcal G_n\}$ must be linearly independent.

    Now, in $(\Z/p\Z)^2$ (and more generally $(\Z/p\Z)^k$), two (non-zero) elements are linearly independent if and only if they generate distinct order $p$ subgroups. That is, non-zero $a,b\in (\Z/p\Z)^2$ are linearly independent if and only if $\langle a\rangle\cap \langle b\rangle=\{0\}$.    
    
    From this observation and by counting the number of distinct order $p$ subgroups of $(\Z/p\Z)^{2}$, we conclude that when $p+1<n$, there are simply not enough elements (or more accurately, subgroups) to assign to each $\mu_i$. That is, $\mathcal G_n$ is never $(\Z/p\Z)^{2}$-anti-admissible for $p<n-1$.  

    We will return to/and utilize this equivalence between linear independence and subgroup intersections many times below (with some notable cases being Theorem~\ref{theorem: affine mod p} and Proposition~\ref{prop: main correspondence}).
\end{example}

\section{Anti-admissibility for affine arrangements}\label{sec: affine admissibility}

In this section, we will fix $Y=\C^2$ unless otherwise stated. As noted above, the only relations between meridians around lines in $H_1(\C^2\setminus \LA)$ are the commutators. Also, unlike in projective arrangements, two distinct lines need not intersect and so $\textrm{Sing}(\LA)$ can often be significantly simpler (or at least smaller). Consequently, anti-admissibility is generally easier to prove in this setting.

We start by further justifying the discussion of Remark~\ref{rem: small groups interesting} with two results.

\begin{proposition}\label{prop: affine integers}
    Let $\LA=\{L_1,\dots,L_n\}$ be an arrangement in $\C^2$. Then $\LA$ is $\Z^k$-anti-admissible for all $2\leq k\leq n$.
\end{proposition}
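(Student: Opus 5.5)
Since $H_1(\C^2\setminus\LA)\cong\Z^n$ is freely generated by the meridians, a homomorphism $\phi$ is the same thing as a choice of vectors $v_i=\phi(\mu_i)\in\Z^k$, one for each line. The plan is to choose these vectors \emph{generically} and then impose surjectivity separately. Linear independence over $\Z$ here means independence over $\mathbb{Q}$, viewing $\Z^k\subset\mathbb{Q}^k$.

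At an intersection point $p_{i_1,\dots,i_r}$, the anti-admissibility condition asks that $v_{i_m}$ and $s_p=\sum_j v_{i_j}$ be linearly independent for each $m$. I would arrange this by making every $2$-element subset of $\{v_1,\dots,v_n\}$ linearly independent and, more strongly, making the $v_i$ in general position in the following sense. If $v_{i_m}$ and $s_p$ were dependent, then some nontrivial combination $a\,v_{i_m}+b\,s_p=0$ would hold. Because $r\ge 2$, $s_p$ involves some $v_{i_l}$ with $l\neq m$ and coefficient $1$. Hence $a\,v_{i_m}+b\,s_p$ is a nonzero formal integer combination of at least two of the $v$'s. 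To rule this out it suffices that the $v_i$ lie in general position, meaning any $\min(k,\cdot)$ of them are independent. For $r\le k-1$ this handles everything, but for large $r$ more care is needed.

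The cleanest route is to avoid general position altogether and use a Zariski-open argument. For each intersection point $p$ and each index $m$, the set of tuples $(v_1,\dots,v_n)\in(\mathbb{Q}^k)^n$ with $v_{i_m}\wedge s_p=0$ is a proper algebraic subset, because $k\ge2$. It is proper since one explicit tuple avoids it: take $v_{i_m}=\mathbf e_1$, $v_{i_l}=\mathbf e_2$ for a single $l\ne m$, and all others zero. Then $v_{i_m}\wedge s_p=\mathbf e_1\wedge\mathbf e_2\neq0$. Likewise, $v_i\ne0$ defines a nonempty open set, and so does the condition that $v_1,\dots,v_n$ span $\mathbb{Q}^k$, which is possible since $k\le n$. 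A finite intersection of nonempty Zariski-open subsets of affine space over the infinite field $\mathbb{Q}$ is nonempty. Since the conditions are invariant under scaling, clearing denominators gives integer vectors satisfying all of them.

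The main obstacle is surjectivity onto $\Z^k$ itself, as opposed to a finite-index image. I would handle it by fixing $v_1,\dots,v_k$ in advance to be a $\Z$-basis of the form $\mathbf e_1+t\,w_1,\dots$. More simply, apply a unimodular change: after finding rational generic vectors spanning $\mathbb{Q}^k$, note that the open conditions are preserved under small perturbations. The set of integer matrices in $GL(k,\Z)$ is Zariski dense in $GL(k)$. So I would choose $(v_1,\dots,v_k)$ to be the columns of some $A\in SL(k,\Z)$ outside the proper closed set determined by the other conditions, with $v_{k+1},\dots,v_n$ chosen generically afterward. This choice makes $\phi$ surjective automatically, and density of $SL(k,\Z)$ (Borel) is the key input. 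As an alternative avoiding Borel density, I would take $v_i=(1,c_i,c_i^2,\dots)$-type moment vectors with one basis adjustment and verify the conditions directly.
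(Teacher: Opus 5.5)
Your Zariski-open argument for the independence conditions is correct. It is a genuinely different route from the paper: non-constructive, and uniform in $k$. The paper instead sets $\phi(\mu_i)=(1,q_i)$, choosing each prime $q_{i+1}$ inductively outside the finite set of rational solutions of $q_{i_l}=\frac1r\sum_m q_{i_m}$, and pads with $\mathbf e_3,\dots,\mathbf e_k$ when $k\ge3$.

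The gap is in the step you flag as the main obstacle, surjectivity:
\begin{enumerate}
\item $GL(k,\Z)$ is \emph{not} Zariski dense in $GL(k)$, since it lies in the proper closed subset $\{(\det A)^2=1\}$.
\item Switching to $SL(k,\Z)$ and Borel density only helps if the bad set of first-$k$ columns is a proper closed subset \emph{of} $SL_k$. Here the bad set means those $A$ that admit no valid completion $v_{k+1},\dots,v_n$. Properness in $\mathbb{A}^{k^2}$ does not imply this: $\det A-1$ is a nonzero polynomial vanishing on all of $SL_k$. You never check this.
\item Once $v_1,\dots,v_k$ is fixed as a $\Z$-basis, you cannot make rational $v_{k+1},\dots,v_n$ integral by a global rescaling, because that rescales the basis too. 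You need integer points of a nonempty Zariski-open set directly.
\item The moment-vector alternative is too vague to count. For $k\ge3$ no choice of distinct integers $c_i$ makes the Vandermonde matrix unimodular, so the unspecified ``basis adjustment'' carries the whole burden.
\end{enumerate}

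The fix is easy and makes Borel density unnecessary: take $v_i=\mathbf e_i$ for $i\le k$.
\begin{itemize}
\item At a point lying only on lines among $L_1,\dots,L_k$, $s_p$ is a sum of $r\ge2$ distinct basis vectors, so those conditions hold automatically.
\item At a point $p$ on some $L_j$ with $j>k$, each condition $v_{i_m}\wedge s_p\neq0$ holds off the zero set of a suitable $2\times2$ minor. That minor is a nonzero polynomial in the free variables $(v_{k+1},\dots,v_n)$, as the following witnesses show.
\begin{itemize}
\item If $i_m>k$, set the other free vectors to $0$ and pick $v_{i_m}$ not parallel to $\sum_{j'\in p,\,j'\le k}\mathbf e_{j'}$. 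If that sum is empty, put $\mathbf e_1,\mathbf e_2$ on two free lines through $p$, as in your properness check.
\item If $i_m\le k$, set $v_j=\mathbf e_t$ for some $t\neq i_m$ and the other free vectors to $0$. Then $s_p$ has nonzero $t$-th coordinate, so it is not a multiple of $\mathbf e_{i_m}$.
\end{itemize}
\item A finite product of nonzero polynomials is nonzero, and a polynomial vanishing on all of $\Z^N$ is zero. So integral $v_{k+1},\dots,v_n$ exist, and $\phi$ is surjective since its image contains $\mathbf e_1,\dots,\mathbf e_k$.
\end{itemize}
This repaired version has the same structure as the paper's proof. Both fix a $\Z$-basis among the images in advance: the paper uses $(1,2),(1,3)$ and $\mathbf e_3,\dots,\mathbf e_k$. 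Both then choose the remaining images to avoid finitely many bad values. The difference is that the paper makes the avoidance explicit, a device it reuses later for the $(\Z/p\Z)^2$ bound.
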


\begin{proof}

    We start by considering $k=2$ and we will define $\phi:H_1(Y\setminus \LA)\rightarrow \Z^2$ inductively, starting with $\phi(\mu_1)=(1,q_1)=(1,2)$ and $\phi(\mu_2)=(1,q_2)=(1,3)$ to guarantee surjectivity. Now, for $i+1> 2$, we set $\phi(\mu_{i+1})=(1,q_{i+1})$ for a prime $q_{i+1}$ chosen in the following way:
    
    Let $\{i_1,\dots,i_{r-1}\}\subset \{1,\dots,i\}$ be any non-empty subset and consider the equation
    $$\frac{1}{r}\left(q+\sum_{m=1}^{r-1}q_{i_m}\right)=q,$$
    and the $r-1$ many equations of the form
    $$\frac{1}{r}\left(q+\sum_{m=1}^{r-1}q_{i_m}\right)=q_{i_l},$$
    for $i_l\in \{i_1,\dots,i_{r-1}\}$. Then each of these $r$ equations has a unique rational solution. Let $A_{i+1}$ denote the (finite) set defined as the union of all such solutions, ranging over all subsets of $ \{1,\dots,i\}$. We choose $q_{i+1}$ to be the smallest prime not contained in $A_{i+1}$.
    Repeating this procedure we can define $\phi(\mu_i)=(1,q_i)$ for all $1\leq i\leq n$.

    Now suppose $p_{i_1,i_2,\ldots,i_r}\in \textrm{Sing}(\LA)$ is any intersection point (with $i_r\neq 2$), then we consider the pairs
    $$\{(1,q_{i_l}),\sum_{m=1}^r(1,q_{i_m})\}.$$
    If for some $l$ this pair is not linearly independent, then there are non-zero $a_1,a_2\in \Z$ such that
    \begin{align*}
        a_1(1,q_{i_l})=a_2(r,\sum_{m=1}^rq_{i_m}).
    \end{align*}
    It then follows that $q_{i_l}=\frac{1}{r}\sum_{m=1}^rq_{i_m}$ and so $q_{i_r}\in A_{i_r}$, a contradiction. If $i_r=2$, this corresponds to $L_1\cap L_2$, but it is clear that $(1,2)$ and $(1,3)$ are linearly independent to $(2,5)$. Therefore all pairs above will be linearly independent and so $\phi$ is $\Z^2$-anti-admissible.

    For $k\geq 3$, we modify the above construction by setting $\phi(\mu_{n-j})=e_{k-j}$ for $j=0,1,\dots,k-3$ so that
    \begin{equation*}
    \phi(\mu_i)=
    \begin{cases}
        (1,q_i,0,\dots,0) \quad &i\in \{1,2,\dots,n-k+2\},\\
         \mathbf{e}_i \quad &i\in\{n-k+3,\dots,n\}.
    \end{cases}
\end{equation*}
One can check that for any $r$-fold intersection point, consisting of $m$ many lines with index in $\{n-k+3,\dots,n\}$ and $r-m$ many in $\{1,2,\dots,n-k+2\}$, all required pairings will still be linearly independent. Furthermore, the set 
$$\{\phi(\mu_1),\phi(\mu_2),\phi(\mu_{n-k+3}),\dots,\phi(\mu_n)\}$$
generates $\Z^k$ when $k\leq n$. 
\end{proof}

\begin{remark}
    Strictly speaking, the choices for the $q_i$ in the above proof did not need to be restricted to primes and so as $n$ grows, one can still obtain a sequence of integers. Nonetheless, the case in the proof will generate a proper subsequence of primes ($2,3,5,11,13,\dots$). To our knowledge, the question as to whether there are any notable properties of this sequence or a sequence defined in a similar manner remains to be shown, although we must admit this area is out of our expertise. Regardless, we would be interested in learning the answer. 
\end{remark}

\begin{remark}\label{rem: exhaustively check}
    The inherit finiteness of the combinatorics of line arrangements implies that the two results below (and other similar statements) can (at least in principle) be proved by exhaustively checking if any possible assignment of images to meridians will satisfy the anti-admissibility condition (an idea echoed by Example~\ref{ex: differing spectra ceva}). However, we provide arguments which fall between the abstract and the brute force approach as similar arguments will appear at other times throughout this paper.  
\end{remark}

In contrast to the integral case, when we instead consider anti-admissibility for groups of the form $G=(\Z/p\Z)^k$, we see that the analogue of the above proposition is simply not true. For instance we have the following (which is also true for projective arrangements).

\begin{lemma}\label{lemma: triple points bad}
    Suppose $\LA$ is an arrangement consisting of at least one point $p\in\textrm{Sing}(\LA)$ with odd intersection multiplicity, then $\LA$ is not $(\Z/2\Z)^2$-anti-admissible.
\end{lemma}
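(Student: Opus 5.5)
The plan is a direct counting argument in $(\Z/2\Z)^2$, using the observation of Example~\ref{example: general position k=2} that two non-zero vectors over $\Z/2\Z$ are linearly independent exactly when they are distinct. Suppose, for contradiction, that $\phi:H_1(Y\setminus\LA)\rightarrow(\Z/2\Z)^2$ is $(\Z/2\Z)^2$-anti-admissible. Let $p=p_{i_1,\dots,i_r}$ be a point with $r$ odd; since $p\in\textrm{Sing}(\LA)$ we have $r\geq 3$. Put $v_m=\phi(\mu_{i_m})$ and $s=\sum_m v_m$. The anti-admissibility condition at $p$ says that each pair $\{v_m,s\}$ is linearly independent. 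Over $\Z/2\Z$ this means that $s\neq 0$, every $v_m\neq 0$, and $v_m\neq s$ for all $m$.

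Next I use that $(\Z/2\Z)^2$ has only three non-zero elements, say $a$, $b$ and $a+b$. Since $s$ is one of them, all the $v_m$ must lie in the two-element set $\{a,b,a+b\}\setminus\{s\}=\{x,y\}$, and these satisfy $x+y=s$. Write $\alpha$ for the number of $m$ with $v_m=x$ and $\beta$ for the number with $v_m=y$, so that $\alpha+\beta=r$. Then $s=\alpha x+\beta y$, where the coefficients are read mod $2$. Because $x$ and $y$ are linearly independent, the equation $s=x+y$ forces both $\alpha$ and $\beta$ to be odd. Their sum $r$ is therefore even, which contradicts $r$ being odd. Hence no anti-admissible $\phi$ exists.

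The argument is purely local at $p$. It uses neither surjectivity nor the relation $\sum_i\mu_i=0$, so it applies equally to projective arrangements, as the paper claims. The only step needing any care is translating linear independence over $\Z/2\Z$ into the condition "distinct and non-zero", and the paper has already recorded this. I do not expect a genuine obstacle; the main point is simply to keep track of the parity bookkeeping.
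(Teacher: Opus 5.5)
Your proof is correct and is essentially the paper's argument. Both show that the images at $p$ take only the two non-zero values distinct from $s$, then count parity. The paper says that since $r$ is odd, one value occurs an even number of times, so the sum equals the other value, which is a contradiction. You argue the contrapositive: $s=x+y$ forces both counts to be odd, so $r$ is even.
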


\begin{proof}
    Suppose $\phi:H_1(Y\setminus \LA)\rightarrow (\Z/2\Z)^2$ was anti-admissible and (up to relabeling) let $p=L_1\cap\dots\cap L_{2k+1}$. Since $\sum_{i=1}^{2k+1}\phi(\mu_i)$ must be non-trivial, we must have that $\{\phi(\mu_1),\dots,\phi(\mu_{2k+1})\}$ consists of only two elements $g_1,g_2\in (\Z/2\Z)^2$. Furthermore, since the multiplicity of $p$ is odd, then without loss of generality, $g_1$ must show up in $\phi(\mu_1),\dots,\phi(\mu_{2k+1})$ exactly an even number of times. It then follows that $\sum_{i=1}^{2k+1}\phi(\mu_i)=g_2$, which contradicts anti-admissibility.   
\end{proof}

In addition to the above statement, triple points imply additional restrictions on anti-admissible morphisms (which also holds for the projective case).

\begin{lemma}\label{lemma: triple points p=3}
    Suppose $\phi:H_1(Y\setminus \LA;\Z)\rightarrow (\Z/3\Z)^2$ is anti-admissible, and $p=L_{1}\cap L_{2}\cap L_{3}$ is a triple point. Then $\phi(\mu_{i})=\phi(\mu_{j})$ for some $1\leq i<j\leq 3$.
\end{lemma}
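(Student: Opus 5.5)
Write $a=\phi(\mu_1)$, $b=\phi(\mu_2)$, $c=\phi(\mu_3)$ in $(\Z/3\Z)^2$, viewed as a $2$-dimensional vector space over $\mathbb F_3$, and set $s=a+b+c$. The plan is to argue by contradiction: assume $a,b,c$ are pairwise distinct and show that the anti-admissibility condition at $p$ fails for at least one of them.

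First we pin down what the condition says. All of $a,b,c$ are non-zero by condition (1). Condition (2) at $p$ requires $s\neq 0$ and $s\notin\langle a\rangle,\langle b\rangle,\langle c\rangle$. Since we are over a field, two non-zero vectors are independent exactly when they span different lines, as observed in Example~\ref{example: general position k=2}. If, for instance, $a$ and $b$ lay on a common line, then $a=\pm b$; we assumed $a\neq b$, so $b=-a$, giving $s=c$, which is not independent from $c$. So under our assumption $a,b,c$ lie on three distinct lines through the origin.

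Now we compute directly. The vectors $a,b$ form a basis, and $c\notin\langle a\rangle\cup\langle b\rangle$, so $c=\alpha a+\beta b$ with $\alpha,\beta\in\{1,2\}=\{\pm1\}$. Then
$$s=(1+\alpha)a+(1+\beta)b.$$
We split into cases. If $\alpha=\beta=-1$, then $s=0$, a contradiction. If exactly one of $\alpha,\beta$ equals $-1$, say $\alpha=-1$, then $s=(1+\beta)b\in\langle b\rangle$, again a contradiction. If $\alpha=\beta=1$, then $s=2a+2b=-(a+b)=-c\in\langle c\rangle$, once more a contradiction.

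Every case contradicts anti-admissibility, so two of $a,b,c$ must coincide. No step is a real obstacle. The only care needed is the reduction from ``distinct'' to ``spanning distinct lines'' in the second paragraph, which is where the case $b=-a$ is handled. The argument never uses the ambient space, so it applies equally to $Y=\cptwo$.
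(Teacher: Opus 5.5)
Your argument is correct and follows the same structure as the paper's proof. First, two distinct images spanning the same order-$3$ subgroup must be negatives of each other, which forces the sum $s$ to equal the third image. Second, three images spanning three distinct subgroups is impossible. The only difference is how that last case is dispatched: the paper uses the fourth order-$3$ subgroup $H_4$, showing $\phi(\mu_1)+\phi(\mu_2)\in H_4$, so that $s\notin H_4$ must lie in $H_1\cup H_2\cup H_3\cup\{0\}$; you instead write $c=\alpha a+\beta b$ with $\alpha,\beta=\pm1$ and check the four sign cases directly.
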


\begin{proof}
    We note that we cannot have $\langle\phi(\mu_1)\rangle=\langle\phi(\mu_2)\rangle=\langle\phi(\mu_3)\rangle,$ as then all pairings $\{\phi(\mu_i),\phi(\mu_1)+\phi(\mu_2)+\phi(\mu_3)\}$ will be linearly dependent. Let us assume that each $\phi(\mu_i)$ generates a distinct (non-trivial) subgroup, which we will label $H_i$ respectively. Since $(\Z/3\Z)^2$ contains exactly four distinct subgroups of order $p=3$, we will let $H_4$ denote the remaining subgroup. Now, $\phi(\mu_1)+\phi(\mu_2)\not\in H_1,H_2$, as in either other case it would imply that $\phi(\mu_1)$ and $\phi(\mu_2)$ are linearly dependent and in particular $H_1=H_2$, a contradiction. Furthermore, if $\phi(\mu_1)+\phi(\mu_2)\in H_3$, then 
    $\{\phi(\mu_3),\phi(\mu_1)+\phi(\mu_2)+\phi(\mu_3)\}$ would be linearly dependent, contradicting anti-admissibility. Hence $\phi(\mu_1)+\phi(\mu_2)\in H_4$. However, since $\phi(\mu_3)\not\in H_4$, we then have $\phi(\mu_1)+\phi(\mu_2)+\phi(\mu_3)\not\in H_4$, which also leads to one of the linear independence conditions failing. 

    We can therefore conclude that not all generated subgroups are distinct, and so without loss of generality, suppose $\langle\phi(\mu_1)\rangle=\langle\phi(\mu_2)\rangle.$ Now, if $\phi(\mu_1)\neq \phi(\mu_2)$, then $\phi(\mu_1)+\phi(\mu_2)+\phi(\mu_3)=\phi(\mu_3)$, which again contradicts anti-admissibility. The result follows.
\end{proof}

While in general, $(\Z/p\Z)^k$-anti-admissibility is not guaranteed for any $p$ and $k$, we may still utilize the extension idea from Proposition~\ref{prop: affine integers} to induct our way up.

\begin{lemma}\label{lemma: vertical extension affine}
    Suppose a line arrangement $\LA=\{L_1,\dots,L_n\}$ in $\C^2$ is $(\Z/p\Z)^k$-anti-admissible for a fixed prime $p$. If $k+1\leq n$, then $\LA$ is $(\Z/p\Z)^{k+1}$-anti-admissible.
\end{lemma}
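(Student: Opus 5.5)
The plan is to extend a given anti-admissible surjection by one extra coordinate, exactly in the spirit of the $k\geq 3$ step of Proposition~\ref{prop: affine integers}. Fix a $(\Z/p\Z)^k$-anti-admissible surjection $\phi:H_1(\C^2\setminus\LA)\cong\Z^n\rightarrow(\Z/p\Z)^k$. Since $\phi$ is surjective and $H_1$ is generated by the meridians, the images $\phi(\mu_1),\dots,\phi(\mu_n)$ span the $\mathbb F_p$-vector space $(\Z/p\Z)^k$. So we can choose indices $i_1,\dots,i_k$ for which $\phi(\mu_{i_1}),\dots,\phi(\mu_{i_k})$ is a basis. Because $k+1\leq n$, there is at least one further index $j\notin\{i_1,\dots,i_k\}$.

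We then define $\psi:H_1(\C^2\setminus\LA)\rightarrow(\Z/p\Z)^{k+1}=(\Z/p\Z)^k\times\Z/p\Z$ on the free generators by
\begin{equation*}
\psi(\mu_i)=\begin{cases}(\phi(\mu_i),0) & i\neq j,\\ (\phi(\mu_j),1) & i=j.\end{cases}
\end{equation*}
This is well defined because in the affine setting the only relations among the meridians are the commutators. For surjectivity, the vectors $(\phi(\mu_{i_m}),0)$ span $(\Z/p\Z)^k\times\{0\}$. Subtracting the appropriate combination of them from $\psi(\mu_j)$ produces $(0,1)$, so the image is everything.

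For the anti-admissibility conditions, we use a projection argument rather than a case check at each intersection point. Let $\pi:(\Z/p\Z)^{k+1}\rightarrow(\Z/p\Z)^k$ be projection onto the first $k$ coordinates. By construction $\pi\circ\psi=\phi$. Suppose a pair $\{\psi(\mu_{i_l}),\sum_m\psi(\mu_{i_m})\}$ at some $p_{i_1,\dots,i_r}\in\textrm{Sing}(\LA)$ satisfied a nontrivial relation $a\,\psi(\mu_{i_l})+b\sum_m\psi(\mu_{i_m})=0$ with $a,b\in\mathbb F_p$ not both zero. Applying the linear map $\pi$ gives the same nontrivial relation for the pair $\{\phi(\mu_{i_l}),\sum_m\phi(\mu_{i_m})\}$, which contradicts the anti-admissibility of $\phi$. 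Condition~(1) transfers in the same way: $\psi(\mu_i)=0$ would force $\phi(\mu_i)=0$.

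So condition~(2) is inherited automatically, whether or not the point lies on $L_j$. The only genuine content is choosing $j$ so that surjectivity holds, and that is precisely where the hypothesis $k+1\leq n$ enters. I expect no real obstacle beyond being careful that linear (in)dependence is taken over $\mathbb F_p$, which is what makes the projection argument valid.
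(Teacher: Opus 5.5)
Your proof is correct and uses the same construction as the paper: append one coordinate that equals $1$ on a single meridian outside a chosen basis and $0$ on all others, with the same surjectivity argument. The only difference is in checking condition (2). You do it uniformly by noting that linear independence lifts along the projection $\pi$, since $\pi\circ\psi=\phi$. The paper instead splits into cases according to whether the point lies on the distinguished line $L_{k+1}$. Your version is shorter, and it would also cover the projective analogue (where the extra coordinate is $\delta_{i,k+1}-\delta_{i,k+2}$) without any change.
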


\begin{proof}
    Suppose $\phi : H_1(\C^2 \setminus\LA)\rightarrow G=(\Z/p\Z)^k$ is the given anti-admissible surjection. Then since $G$ is a $\Z/p\Z$ vector space, $\{\phi(\mu_1),\dots,\phi(\mu_n)\}$ contain a basis for $G$. We relabel the lines in $\LA$ so that $\{\phi(\mu_1),\dots,\phi(\mu_k)\}$ is such a basis. Now we extend $\phi$ to a homomorphism $\phi' : H_1(\C^2 \setminus\LA)\rightarrow (\Z/p\Z)^{k}\oplus(\Z/p\Z)$ via
    $$\phi'(\mu_i)=(\phi(\mu_i),\delta_{i,k+1}).$$
    Now, $\phi'$ is well defined since $H_1(\C^2\setminus\LA)$ is freely generated by the $\mu_i$'s.  Moreover, since $\{\phi(\mu_1),\dots,\phi(\mu_k)\}$ is a basis for $G$, there are $a_i\in\{0,1,\dots, p-1\}$ not all zero for which
    $$\sum_{i=1}^ka_i\phi(\mu_i)=\phi(\mu_{k+1}).$$
    It follows that 
    $$\phi'(\mu_{k+1}-\sum_{i=1}^ka_i\mu_i)=(\phi(\mu_{k+1})-\sum_{i=1}^ka_i\phi(\mu_i),1)=(0,\dots,0,1).$$
    and so $G\oplus \Z/p\Z$ is spanned by $\{\phi'(\mu_1),\dots,\phi'(\mu_{k+1})\}$, provided $k+1\leq n$. 
    
    Finally, suppose $p_{i_1,i_2,\ldots,i_r}\in \textrm{Sing}(\LA)$ is any intersection point so that 
    \begin{align*}
           \sum_{m=1}^{r}\phi'(\mu_{i_m})&=(\sum_{m=1}^{r}\phi(\mu_{i_m}),\sum_{m=1}^{r}\delta_{i_m,k+1})\\
           &=
           \begin{cases}
               \left(\sum_{m=1}^{r}\phi(\mu_{i_m}),1\right) \quad p_{i_1,i_2,\ldots,i_r}\in L_{k+1}\\
               \left(\sum_{m=1}^{r}\phi(\mu_{i_m}),0\right) \quad otherwise.\\
           \end{cases}
    \end{align*}
    
    If $p_{i_1,i_2,\ldots,i_r}\not\in L_{k+1}$, then linear independence of $\phi'(\mu_{i_l})$ and $\left(\sum_{m=1}^{r}\phi(\mu_{i_m}),0\right)$ follows immediately from the anti-admissibility of $\phi$. 
    
    On the other hand, if $a_{i_l}\phi'(\mu_{i_l})=b_{i_l}\left(\sum_{m=1}^{r}\phi(\mu_{i_m}),1\right)$, then $b_{i_l}=0$ when $i_l\neq k+1$ (which implies $a_{i_l}=0$) or $b_{k+1}=a_{k+1}$. The latter of these implies $\phi(\mu_{k+1})=\sum_{m=1}^{r}\phi(\mu_{i_m})$, which would contradict the anti-admissibility of $\phi$. Consequently, in all cases, $a_{i_l}=b_{i_l}=0$ and so linear independence follows. Hence, $\phi'$ is $(\Z/p\Z)^{k+1}$-anti-admissible.
\end{proof}

As a consequence of the above lemma, we see that we can provide an analogue for Proposition~\ref{prop: affine integers} in the case of $(\Z/p\Z)^k$ by providing a lower bound for the prime for which all arrangements are $(\Z/p\Z)^2$-anti-admissible. 

\begin{theorem}\label{theorem: affine mod p}
    Let $\LA=\{L_1,\dots,L_n\}$ be a line arrangement in $\C^2$. Then for all primes $p\geq n-1$, $\LA$ is $(\Z/p\Z)^2$-anti-admissible. 
\end{theorem}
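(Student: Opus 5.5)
The plan is to build $\phi$ one line at a time, exactly as in the proof of Proposition~\ref{prop: affine integers}, but now choosing each image $x_i=\phi(\mu_i)\in(\Z/p\Z)^2$ from a finite set of still-allowed vectors and counting how many vectors are forbidden. I will first fix $x_1,x_2$ linearly independent, which makes $\phi$ surjective. Suppose $x_1,\dots,x_i$ are already chosen so that the anti-admissibility condition holds at every point of the sub-arrangement $\{L_1,\dots,L_i\}$. I then look at the points $P\in L_{i+1}\cap\textrm{Sing}(\LA)$. Since $L_{i+1}$ meets each earlier line at most once, the sets $T_P=\{l\le i: P\in L_l\}$ are pairwise disjoint, so $\sum_P|T_P|\le i$. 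Put $s_P=\sum_{l\in T_P}x_l$. At $P$ the new vector $x$ must satisfy three conditions:
\begin{enumerate}
\item $s_P\neq 0$ (otherwise $s_P+x=x$ and the pair at $L_{i+1}$ is dependent);
\item $x\notin\langle s_P\rangle$, which is the pair $\{x,s_P+x\}$;
\item $s_P+x\notin\langle x_l\rangle$ for each $l\in T_P$.
\end{enumerate}
Condition (2) forbids $p$ vectors. For each $l$, condition (3) forbids the affine line $-s_P+\langle x_l\rangle$, which is another $p$ vectors.

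Condition (1) does not involve $x$, so it has to be built into the inductive hypothesis. I would strengthen the invariant to the following: for every point $P$ of the full arrangement and every subset $S$ of the lines through $P$ that occurs as an initial segment in the chosen ordering, $\sum_{l\in S}x_l\neq0$. Preserving this adds one more forbidden vector per point, namely $x\neq -s_P$. If the invariant turns out to be too rigid, the fallback is to restrict to $x_l=(1,a_l)$ with all $a_l\in\Z/p\Z$. Then every partial sum has first coordinate $|S|$, which is nonzero as long as $|S|<p$; the only remaining risk is a point of multiplicity exactly $p$ or $p+1$, and those I would treat separately.

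The main obstacle is the counting. The naive bound gives $\sum_P(1+|T_P|)p\le 2ip$ forbidden vectors, which is far more than $p^2$ when $i\approx n\approx p$. So the crude count has to be sharpened, and I would do this in the $(1,a)$-parametrization. With $x_l=(1,a_l)$, the only constraints that are not automatically satisfied at a point of multiplicity $r\not\equiv0\pmod p$ are of the form $\sum_{m}a_m\neq r\,a_l$. Each of these is a single linear condition on the new value $a_{i+1}$, so it forbids at most one value of $a_{i+1}$, with coefficient $r-1$ or $-1$, which is nonzero since $r\le n\le p+1$. At a double point $\{l,i+1\}$ this simply says $a_{i+1}\neq a_l$. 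At a point with $|T_P|=r-1\ge2$ it forbids at most $r$ values, rather than $r-1$.

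The total number of forbidden values is therefore at most $i$ plus the number of points of multiplicity at least $3$ on $L_{i+1}$. To make this fit inside $p\ge n-1$, I would order the lines cleverly, for example by processing lines through high-multiplicity points early. I would also use the extra freedom of nonzero scalars $x_l=c_l(1,a_l)$, which changes the linear conditions into $\sum c_m a_m\neq a_l\sum c_m$, and which I expect can absorb the surplus. Checking that this sharpened count really closes at $p\ge n-1$, which matches the bound forced by $\mathcal G_{n}$ in Example~\ref{example: general position k=2}, is the step I expect to require the most care.
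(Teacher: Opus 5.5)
Your setup (adding lines one at a time, the disjoint sets $T_P$, conditions (1)--(3), and keeping every sub-arrangement $\{L_1,\dots,L_i\}$ anti-admissible) matches the paper's. However, the proposal stops at exactly the step that carries the theorem. No count that closes is ever produced, and the concrete fallback you describe cannot close at $p=n-1$.

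Vectors of the form $c_l(1,a_l)$ lie in only $p$ of the $p+1$ order-$p$ subgroups of $(\Z/p\Z)^2$; they never use $\langle(0,1)\rangle$. For $\mathcal G_n$ with $p=n-1$, anti-admissibility forces the $n=p+1$ images into pairwise distinct subgroups, and hence into all of them. So no anti-admissible assignment exists in your chart, with or without the scalars $c_l$. Your own estimate cannot close there either. At the last line it only guarantees at most $(n-1)+t$ forbidden values out of $p$, which is useless at $p=n-1$ even when $t=0$, and for $\mathcal G_n$ the bound $n-1=p$ is attained.

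The missing idea is to choose $x_{i+1}=c\,g$ in two stages. First, note two redundancies in your count:
\begin{itemize}
\item At a double point, your conditions (2) and (3) are the same condition $x\notin\langle x_l\rangle$, so the $2ip$ count double counts.
\item Condition (1) is automatic, because $-s_P\in\langle s_P\rangle$ is already excluded by (2).
\end{itemize}
Now choose the subgroup $\langle g\rangle$ different from every $\langle s_P\rangle$. There are at most $i\leq n-1\leq p$ of these among $p+1$ subgroups, and this is precisely where $p\geq n-1$ enters. With this choice, every $c\neq 0$ satisfies (2) and all double-point conditions. For $|T_P|\geq 2$, the coset $s_P+\langle g\rangle$ meets each $\langle x_l\rangle$ in at most one point, so each earlier line through a point of multiplicity $\geq 3$ forbids at most one scalar. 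That leaves at most $i$ forbidden scalars among the $p-1$ nonzero ones, which proves the theorem for $p>n$.

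Getting down to $p\geq n-1$ requires one unit of slack at $L_{n-1}$ and two at $L_n$, since there you must have at most $n-3$ forbidden scalars. The paper gets this slack in two ways:
\begin{itemize}
\item by ordering the lines, putting lines that carry double points or parallel partners last;
\item when no such lines are available, by deliberately giving several early lines through a common point the same image, so that their forbidden scalars coincide.
\end{itemize}
That case analysis is what your ``order cleverly and absorb the surplus with scalars'' step would have to supply, and nothing in the proposal does it.
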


\begin{proof}

Let $\LA$ be a fixed arrangement with $n$ lines. Similar to Proposition~\ref{prop: affine integers}, we will inductively define a homomorphism $\phi:H_1(Y\setminus \LA)\rightarrow (\Z/p\Z)^2$. However, we will first prove the case $p>n$ and then explain the required modifications to to justify the inequality $p\geq n-1$ afterwards. 

First, we note that $(\Z/p\Z)^2$ contains $p+1$ distinct subgroups of order $p$, for instance, those generated by and $(1,m)$ for $m=0,\dots,p-1$ in addition to one generated by $(0,1)$.  Recalling the observation in Example~\ref{example: general position k=2}, after enumerating these subgroups (and their generators, respectively) by $H_m=\langle g_m\rangle =\langle(1,m)\rangle$ for $m\in\{1,\dots,p\}$ and $H_{p+1}=\langle g_{p+1}\rangle=\langle(0,1)\rangle$, we wish to associate to each $\mu_i$ a subgroup $H_{m_i}$ by defining $\phi(\mu_i)=c_ig_{m_i}$ for some $c_i\in\{1,\dots,p-1\}$ and some $m_i\in\{1,\dots,p+1\}$. 

We start by setting $\phi(\mu_1)= g_1=(1,1)$ and $\phi(\mu_2)=g_2=(1,2)$, so that $\phi(\mu_1),\phi(\mu_2)$ and $\phi(\mu_1)+\phi(\mu_2)$ are nontrivial. Then for $i\geq 3$, let $\LA_i=\{L_1,\dots, L_i\}$ denote the sub-arrangement built with by only considering the first $i$ many lines in $\LA$. In this case, the set of intersection points of this sub-arrangement which lay on $L_i$, denoted $Q_i:=\textrm{Sing}(\LA_i)\cap L_i=\{q_1,\dots,q_{i_l}\}$, will satisfy $l\leq i-1<n<p+1$. For each $q\in Q_i$, we compute the (non-trivial) subgroup 
\begin{equation*}
    H_q=\langle
    \sum_{L\in \LA_{i-1},\  
    q\in L }\phi(\mu_L)\rangle.
\end{equation*}
Since $l<p+1$, there is at least one order $p$ subgroup $H<(\Z/p\Z)^2$ for which $H\not\in \{H_q\mid q\in Q_i\}$. Choose $m_i\in\{1,\dots,p+1\}$ so that $H_{m_i}=H$. 

Next, for each fixed $q\in Q_i$, we compute the elements 
\begin{equation*}
    h_{c}^q:=cg_{m_i} +\sum_{L\in \LA_{i-1},\  
    q\in L }\phi(\mu_L)
\end{equation*}
for $c\in \{1,\dots,p-1\}$ and notice that each $h_{c}^q$ generates a distinct order $p$ subgroup of $(\Z/p\Z)^2$. Indeed, we first note that $h_c^q$ is non-zero since otherwise the (non-zero) element $cg_{m_i}$ (and hence also $g_{m_i}$) would generate the same subgroup as $\sum_{L\in \LA_{i-1},\  
q\in L }\phi(\mu_L)$, contradicting how $m_i$ was chosen. Moreover, if distinct $c,c'\in\{1,\dots,p-1\}$ satisfy $\langle h_{c}^q\rangle=\langle h_{c'}^q\rangle$, then 
\begin{equation*}
    cg_{m_i} +\sum_{L\in \LA_{i-1},\  
    q\in L }\phi(\mu_L)=d\left(c'g_{m_i} +\sum_{L\in \LA_{i-1},\  
    q\in L }\phi(\mu_L)\right)
\end{equation*}
for some $d\in \{1,\dots,p-1\}$ and in particular, 
\begin{equation*}
    (c-dc')g_{m_i}=(d-1)\sum_{L\in \LA_{i-1},\  
    q\in L }\phi(\mu_L).
\end{equation*}
Similar to above, $H_{m_i}$ and $\langle \sum_{L\in \LA_{i-1},\ q\in L }\phi(\mu_L)\rangle$ intersect trivially which implies $d-1\equiv c-dc' \equiv 0\pmod{p}$ and consequently, $c=c'$. 

For each $q\in Q$, we consider the values of $c$ for which $h_c^q$ generates the same subgroup of $\phi(\mu_L)$ for some line $L\in \LA_{i-1}$ passing through $q$ and we denote this set of all such values by $C_q$. Then $C_q$ has at most $r_q-1$ many elements where $r_q$ is the multiplicity of $q$ in $\LA_i$. It then follows that the set $C_{q_1}\cup\dots\cup C_{q_{i_l}}$ has at most $i-1$ elements. In particular, since $i-1\leq n-1<p-1$, we can find $c\in \{1,\dots,p-1\}\setminus (C_{q_1}\cup\dots\cup C_{q_{i_l}})$, which we will denote by $c_i$.

We claim that with these choices, $\phi(\mu_i)=c_ig_{m_i}$ (recalling $c_1=c_2=1,$ and $ m_1=1,m_2=2$) demonstrates $(\Z/p\Z)^2$-anti-admissible. First, $\phi$ is surjective since $g_1$ and $g_2$ generate $(\Z/p\Z)^2$. Now, if $q=q_{i_1,i_2,\ldots,i_r}$ is any intersection point, and $i$ is the largest index of all lines passing through $q$. Then we notice that that the sub-arrangement $\LA_i$ contains all lines passing through $q$ and so
$$ \sum_{j=1}^{r}\phi(\mu_{i_j})=h_{c_i}^q=c_ig_{m_i} +\sum_{L\in \LA_{i-1},\  q\in L }\phi(\mu_L).$$
Considering linear independence, we note that $\phi(\mu_{i_j})$ and $h_{c_i}^q$ will generate distinct subgroups when $i_j\neq i$ by our choice of $c_i$. When $i_j=i$, if linear independence is not satisfied then $dg_{m_i}=h_{c_i}^q$ for some $d\in \Z/p\Z$ and thus
$$(d-c_i)g_{m_i}=\sum_{L\in \LA_{i-1},\  q\in L }\phi(\mu_L).$$
In this case, we would be contradicting the choice of $m_i$. It follows that $\phi$ is $(\Z/p\Z)^2$-anti-admissible.

Now, to improve this result to $p\geq n-1$, we note that the obstruction is that when we attempt to pick the coefficients $c_{n-1}$ and $c_{n}$ for the meridians $\mu_{n-1}$ and $\mu_{n}$, respectively, the set $\{1,\dots,p-1\}\setminus (C_{q_1}\cup\dots\cup C_{q_{i_l}})$ may be empty. Consequently, we must show that for any arrangement, we may order the lines so that for $L_{n-1}$ and $L_n$, we have $|(C_{q_1}\cup\dots\cup C_{q_{i_l}})|\leq n-3$ (and hence less than $p-1$) in the corresponding step above. From here on, we will somewhat abuse notation and write $C_{i}=(C_{q_1}\cup\dots\cup C_{q_{i_l}})$ when we refer to this set, as the specific labeling of points will not be vital to the arguments.

Also, one can easily check that for any arrangement with $n=2,3$ or $4$, the theorem is true. Therefore, we will assume that $n\geq 5$.

Note that if $q\in L_{i}$ is a double point of $\LA_i$ (say $q=L_i\cap L_j$ for some $j<i$), then since we can choose $H_{m_i}$ to be any order $p$ subgroup distinct from $H_{m_j}$, then $C_q=\emptyset$. On the other hand, if $L_j$ does not intersect $L_i$, then there is also no restriction on the choice of $c_i$ coming from $\phi(\mu_j)$ (as there is no restriction on the image at all). Therefore, if there are at least two lines in $\LA$ which each contain (a): two double points, (b): two ``missing intersection points" due to parallel lines (also in $\LA$) or (c): at least one of each type, then both lines will necessarily have $|C_i|\leq n-3$, and so relabeling $\LA$ so that these are indexed by $n-1$ and $n$, we have our desired result. In fact, regarding $L_{n-1}$, we only need it to contain a single double point (as an intersection in $\LA_{n-1}$) or a single other parallel line (that is not $L_n$). Consequently, we see that (b) occurring for a single line is sufficient, and so we ignore this case entirely.

Now, let us assume instead that there is only one line $L\in\LA$ satisfying (a) or (c). Similar to above, it is clear that setting $L_n=L$ will satisfy $|C_{n}|\leq n-3< p-1$. We will now provide the argument for finding $L_{n-1}$ in the case that $L$ contains two double points, the other case is similar. First, re-index so that the two lines which intersect $L$ at double points become $L_1$ and $L_2$. Then these two lines themselves must also intersect at a point with multiplicity $r\geq 3$ (as otherwise we would be in a previous case), and so we take one of the other $r-2$ many lines passing through $L_1\cap L_2$ and label it $L_{n-1}$ (Figure~\ref{fig: two double}). 

\begin{figure}[h]
    \begin{center}
    \includegraphics[scale=.5]{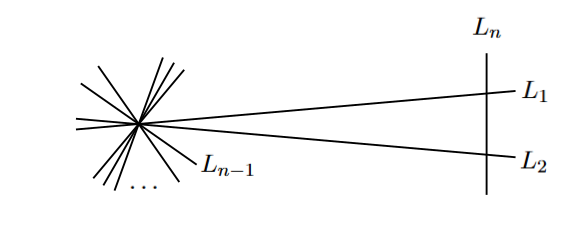}
    \caption{Picking $L_{n-1}$ when $L_n$ contains multiple double points}
    \label{fig: two double}
    \end{center}
    \end{figure}
Lastly, instead of starting by defining $\phi(\mu_1)=g_1$ and $\phi(\mu_2)=g_2$, we instead define $\phi(\mu_1)=\phi(\mu_2)=g_1$. Since $\phi(\mu_n)$ can (and must) be chosen to generate a distinct subgroup from $\phi(\mu_1)$, $\phi$ will be surjective since $\phi(\mu_1)$ and $\phi(\mu_n)$ are linearly independent. Moreover, the anti-admissibility condition will be satisfied at the intersection $q=L_1\cap L_2$ by the eventual choice of $\phi(\mu_{n-1})$. Finally, from $\phi(\mu_1)=\phi(\mu_2)$ we must have $|C_q|\leq r-2$. It then follows that $|C_{n-1}|\leq n-3$.

There are three remaining cases for consideration. Either the arrangement contains no double points in addition to no lines being parallel, there are exactly two parallel lines or there is at least one double point, but no line contains more than one (double point). We will show the case when all lines intersect each other and all multiplicities are at least 3. The other two cases use similar reasoning to find the lines and modify $\phi$, with the exception that they start with a line that could take the place of $L_{n-1}$.

We may assume that all the intersection points of $\LA$ are triple points. Indeed, if there is a point with multiplicity $r\geq 5$, we set $r-2$ of the lines to be $L_1,\dots,L_{r-2}$ and $\phi(\mu_{1})=\dots=\phi(\mu_{r-2})=g_1$. Then the remaining two lines passing through this point can be taken to be $L_{n-1}$ and $L_{n}$. If there is a 4-fold point, let $L_n$ be a line on which this point is contained. Since $n\geq 5$, there is at least one other triple or 4-fold point on $L_n$. We label the lines as in Figure~\ref{fig: 4 fold} below (shown for the triple point case), and define $\phi(\mu_1)=\phi(\mu_2)=g_1$ and $\phi(\mu_3)=\phi(\mu_4)=g_2$ (if $L_3$ and $L_4$ intersect at a 4-fold point, the same assignments can be used). 

\begin{figure}[h]
    \begin{center}
    \includegraphics[scale=.5]{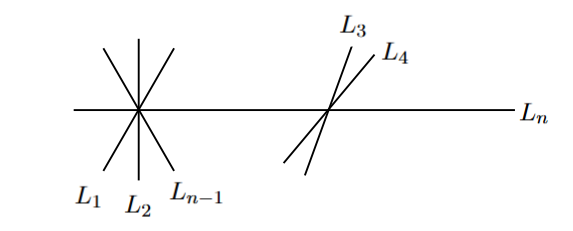}
    \caption{Labels when there is a quadruple point}
    \label{fig: 4 fold}
    \end{center}
    \end{figure}

Since $L_1\cap L_2\cap L_{n-1}$ is a triple point in $\LA_{n-1}$, and $\phi(\mu_1)=\phi(\mu_2)$, $|C_{n-1}|\leq n-3$ as required. The presence of multiple of the same meridian images also implies $|C_{n-1}|\leq n-3$. We may now assume $\LA$ is an arrangement with only triple points.

For this final case, let $L_n$ be any line in this arrangement. Since $n\geq 5$, it contains at least two triple points, and we will let the four other lines passing through these triple points be labeled as $L_1,\dots,L_4$ so that two of the intersection points on $L_n$ are $L_1\cap L_2\cap L_n$ and $L_3\cap L_4 \cap L_n$. Then since $L_1$ and $L_3$ must intersect at a triple point, there must be a another line in $\LA$ passing through $L_1\cap L_3$. We will index this line to be $L_{n-1}$ (Figure~\ref{fig: triple only}).

\begin{figure}[h]
    \begin{center}
    \includegraphics[scale=.5]{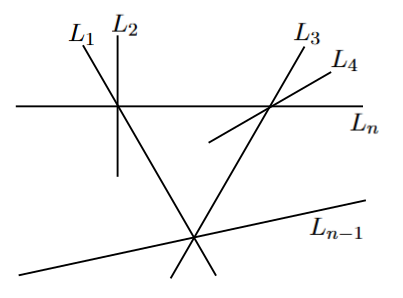}
    \caption{Labeling six of the lines in $\LA$}
    \label{fig: triple only}
    \end{center}
    \end{figure}

Finally, we define $\phi(\mu_1)=\dots=\phi(\mu_4)=g_1$, and then proceed with the induction as above. In this case, $|C_{n-1}|\leq n-3$ follows from $\phi(\mu_1)=\phi(\mu_3)$ whereas $|C_{n}|\leq n-3$ follows from $\phi(\mu_1)=\dots=\phi(\mu_4)$. We note that this assignment of the first four meridians does not impact the linear independence argument at any of the intersection points since for any $1\leq i<j\leq 4$, there must be a line of index $k>4$ for which $L_i\cap L_j\cap L_k$ is a triple point. The method above for choosing $\phi(\mu_k)$ will guarantee the anti-admissibility condition at this intersection. 

All other cases for which we have not provided the explicit method for arranging the lines and defining $\phi$ are proven via a combination of the ideas above and so we omit showing these cases. Nonetheless, the improvement to $p\geq n-1$ follows.
\end{proof}

\begin{remark}\label{rem: p=n,n-1 cases}

    As demonstrated by Example~\ref{example: general position k=2}, the above result is optimal when searching for a lower bound which depends only on the number of lines in the arrangement. However, the proof also shows that there may be a lot of cases for which other combinatorial data can provide further generalizations (see Questions~\ref{question: reasonable bounds} below). Regardless, one of the primary consequences of Theorem~\ref{theorem: affine mod p} is the following.
\end{remark}

\begin{corollary}\label{cor: finite values}
    Let $\LA=\{L_1,\dots,L_n\}$ be a line arrangement in $\C^2$ and $2\leq k\leq n$. Then there are at most finitely many pairs $(k,p)$ for which $\LA$ is not $(\Z/p\Z)^k$-anti-admissible.
\end{corollary}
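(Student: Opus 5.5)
The plan is to combine Theorem~\ref{theorem: affine mod p} with Lemma~\ref{lemma: vertical extension affine}. Here $k$ ranges over the finite set $\{2,\dots,n\}$ and $p$ ranges over all primes, so it suffices to show that all but finitely many primes $p$ have the following property: $\LA$ is $(\Z/p\Z)^k$-anti-admissible for every $2\leq k\leq n$ simultaneously.

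Fix a prime $p\geq n-1$. By Theorem~\ref{theorem: affine mod p}, $\LA$ is $(\Z/p\Z)^2$-anti-admissible. We then apply Lemma~\ref{lemma: vertical extension affine} inductively. If $\LA$ is $(\Z/p\Z)^k$-anti-admissible and $k+1\leq n$, the lemma gives $(\Z/p\Z)^{k+1}$-anti-admissibility. Starting from $k=2$ and iterating up to $k=n$, we find that $\LA$ is $(\Z/p\Z)^k$-anti-admissible for every $2\leq k\leq n$. (For $k=n$ one could instead cite Example~\ref{example: canonical}, but the induction already covers it.)

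Consequently, any pair $(k,p)$ for which $\LA$ fails to be $(\Z/p\Z)^k$-anti-admissible must have $p<n-1$ and $2\leq k\leq n$. There are only finitely many primes below $n-1$, so the number of such pairs is at most $(n-1)\cdot\pi(n-2)$, which is finite. This proves the corollary.

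There is no real obstacle here; the substantive work is in Theorem~\ref{theorem: affine mod p}. The only point to check is that the hypothesis $k+1\leq n$ of Lemma~\ref{lemma: vertical extension affine} holds at each step of the induction, and it does because we stop at $k=n$. Should the delicate improvement from $p>n$ to $p\geq n-1$ in the theorem be in doubt, the argument goes through unchanged using the easier bound $p>n$ established first in its proof, since that still excludes only finitely many primes.
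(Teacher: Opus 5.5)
Your proposal is correct and follows the paper's own route: the paper presents the corollary as an immediate consequence of Theorem~\ref{theorem: affine mod p}, which gives $(\Z/p\Z)^2$-anti-admissibility for $p\geq n-1$, combined with iterating Lemma~\ref{lemma: vertical extension affine} up to $k=n$, so the only exceptions come from the finitely many primes $p<n-1$. Your fallback to the cruder bound $p>n$ from the first part of that proof is a sensible safeguard, since the paper omits several cases in the refinement to $p\geq n-1$, while the $p>n$ argument is complete.
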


Corollary~\ref{cor: finite values} now provides us with the following definition, which will also allow us to ask various questions surrounding anti-admissibility. The specific naming is motivated by Section~\ref{section: branched covers}.

\begin{definition}
   Fix an arrangement $\LA=\{L_1,\dots,L_n\}$,  for $2\leq k\leq n$, let $p_{\A}(\LA,k)$ denote the minimum prime $p_k$ for which $\LA$ is $(\Z/p\Z)^k$-anti-admissible for all primes $p\geq p_k$. We will refer to the set $\{p_{\A}(\LA,2),\dots, p_{\A}(\LA,n)\}$ as the \emph{(affine) smooth spectrum} of $\LA$. 
\end{definition}

From Lemma~\ref{lemma: vertical extension affine}, we know that the function $p_{\A}(\LA,k)$ is non-increasing for fixed $\LA$ and that $p_{\A}(\LA,|\LA|)=2$ for all arrangements by Example~\ref{example: canonical}. The primary question to then ask is what other behaviour is demonstrated by the spectra. 

In principle, the only information needed to compute $p_{\A}(\LA,k)$ is that contained in the incidence matrix (or the Levi graph) corresponding to the arrangement. Lemma~\ref{lemma: triple points bad} also shows that knowledge of the number of intersection points with certain multiplicities can provide restriction on anti-admissibility (to an extent this is also demonstrated by Remark~\ref{rem: double points LI} regarding arrangements with a large amount of double points). We are then left to ask how much information can be obtained using only intersection multiplicities? Put another way, can the spectra be computed without knowing how the singular points are distributed among the lines of the arrangement?

\begin{question}\label{question: reasonable bounds}
    What are the relations between the set of multiplicities $\{t_1,\dots,t_n\}$ and the spectra $p_{\A}(\LA,k)$? More specifically, for what classes of arrangements does the set of multiplicities completely determine the spectrum? 
\end{question}

Two initial examples for which the spectra can be explicitly computed are provided below.

\begin{example}\label{ex: spectrum GP}
    The same reasoning from Example~\ref{example: general position k=2} can be used for when $\mathcal G_n$ is an arrangement of $n$ lines in general position and $2\leq k\leq n$ is arbitrary. Then counting order $p$ subgroups of $(\Z/p\Z)^k$, if $1+p+\dots+p^{k-1}<n$, then $\mathcal G_n$ will not be $(\Z/p\Z)^k$-anti-admissible. On the other hand, if $p$ is any prime for which $1+p+\dots+p^{k-1}\geq n$, then any assignment of the meridians $\mu_i$ to generators of distinct (order $p$) subgroups will define a homomorphism that satisfies the anti-admissibility condition at all intersections. Furthermore, since $k\leq n$, by sending the first $k$ meridians to the standard basis elements, we can also guarantee surjectivity. Using this, the smooth spectrum of an arrangement in general position can be computed completely. Most importantly, this shows that for any $k$ there are choices for $\LA$ for which $p_{\A}(\LA,k)$ can be made arbitrarily large.
\end{example}

\begin{example}\label{ex: spectrum NP}
    Let $\LA=\{L_1,\dots,L_n\}$ be a near pencil arrangement, so that $L_n$ is a generic line and the sub-arrangement $\LA_{n-1}$ is a pencil. By Lemma~\ref{lemma: triple points bad}, if $n$ is even, then $p_{\A}(\LA,2)\neq 2$. If fact, one can check that this restriction coming from the parity of the multiplicity for the (unique) intersection point of $\LA_{n-1}$ is the only obstruction to anti-admissibility (cf.~\cite{harristhesis}).
    Hence the smooth spectrum of $\LA$ is either $\{2,\dots,2\}$ or $\{3,2,\dots,2\}$ in the odd and even cases, respectively. 
\end{example}

When one considers various examples of surjections satisfying anti-admissibility conditions appearing in the literature (for instance, \cite{akhmedovsakalliballquotients}, \cite{harpar24} or \cite{kul04}), it is often the case that the required primes are small when compared to the number of lines in the arrangement. Consequently, since the bounds suggested by Theorem~\ref{theorem: affine mod p} are closely related to those attained by generic arrangements as in Example~\ref{ex: spectrum GP}, we expect the following to be true when we consider larger values of $k$. 

\begin{conjecture}\label{conj: affine vs general}
    If $\LA$ is an arrangement and $\mathcal{G}_n$ is the arrangement of $n=|\LA|$ many lines in general position, then $p_{\A}(\LA,k)\leq p_{\A}(\mathcal G_n,k)$ for all $2\leq k\leq n$.
\end{conjecture}

By continuing to consider generic arrangements, it is clear that when $k$ is close to $n$, we have $p_{\A}(\mathcal G_n,k)=2$. Conjecture~\ref{conj: affine vs general} then suggests another interesting question.

\begin{question}
    For each $n>0$, what is the least $k\leq n$ for which $p_{\A}(\LA,k)=2$ for all arrangements containing $n$ lines? Namely, is it $\lceil\log_2(n+1)\rceil$?
\end{question}

\begin{remark}\label{rem: naive lower bounds affine}
    A generalization of Example~\ref{ex: spectrum GP} allows us to notice that if $\LA$ is an arrangement that contains $l$ many generic lines, then the sub-arrangement of these $l$ lines is clearly itself a generic arrangement. It then follows that
    $$p_{\A}(\mathcal G_l,k)\leq p_{\A}(\LA,k)$$
    for all $2\leq k\leq l$:
    Indeed, if $p\geq p_{\A}(\LA,k)$ and $\phi:H_1(\C^2\setminus\LA)\rightarrow(\Z/p\Z)^k$ is anti-admissible, then while we cannot conclude the induced homomorphism $\phi':H_1(\C^2\setminus\mathcal G_l)\rightarrow(\Z/p\Z)^k$ is surjective, we still know it must satisfy the anti-admissibility condition at all intersections. Since $\mathcal G_l$ is a generic arrangement, this means $1+p+\dots+p^{k-1}\geq l$. Then by Example~\ref{ex: spectrum GP} we can replace $\phi'$ by a new anti-admissible surjection provided $k\leq l$. 

    More generally, If $\LA'\subset \LA$ is any sub-arrangement for which the multiplicity of each point in $\textrm{Sing}(\LA')$ (assumed non-empty) is equal to its respective multiplicity in $\LA$, then 
    $$p_{\A}( \LA',2)\leq p_{\A}(\LA,2).$$
    These observations can be further applied to arrangements that admit certain decompositions.
\end{remark}

\begin{proposition}\label{prop: affine sum of arrangements}
    Suppose an arrangement can be decomposed as the union of $n$ (non-trivial) arrangements in $\C^2$ ($\LA=\LA_1\cup\dots\cup\LA_n,$ with $ |\LA_i|\geq 2$) so that any two lines $L_i\in \LA_i$ and $L_j\in\LA_j$ with $i\neq j$, intersect at a double point. Then 
    $$p_{\A}( \LA,2)\geq \max\{p_{\A}( \LA_1,2),\dots, p_{\A}( \LA_n,2),n-1\}.$$
\end{proposition}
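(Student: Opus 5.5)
The inequality has two parts. One part compares $p_{\A}(\LA,2)$ with each $p_{\A}(\LA_i,2)$; the other compares it with $n-1$. I would handle them separately, in both cases restricting a given anti-admissible $\phi$ for $\LA$ to a well-chosen sub-arrangement, as in Remark~\ref{rem: naive lower bounds affine}.

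For the first part, fix $i$ and let $p\geq p_{\A}(\LA,2)$. Take an anti-admissible surjection $\phi:H_1(\C^2\setminus\LA)\rightarrow(\Z/p\Z)^2$. I want to apply the last observation of Remark~\ref{rem: naive lower bounds affine} with $\LA'=\LA_i$. The needed check is that every point $q\in\textrm{Sing}(\LA_i)$ has the same multiplicity in $\LA$ as in $\LA_i$. Suppose a line $L\in\LA_j$ with $j\neq i$ passed through $q$. Then $L$ would meet each of the at least two lines of $\LA_i$ through $q$ at a point of multiplicity at least $3$. This contradicts the hypothesis that lines from different pieces meet at double points. So the sum $\sum_{L\ni q}\phi(\mu_L)$ is the same whether computed in $\LA$ or in $\LA_i$, and the restriction of $\phi$ to $H_1(\C^2\setminus\LA_i)$ satisfies the anti-admissibility condition at every point of $\textrm{Sing}(\LA_i)$.

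The restriction need not be surjective, so I would repair it. If the images of the meridians of $\LA_i$ span a line $\langle g\rangle$, that line cannot contain two meridian images of lines that meet. Since $|\LA_i|\geq 2$, pick two lines of $\LA_i$ that intersect; their images must be independent, and the restriction is already surjective. The remaining case is that all lines of $\LA_i$ are mutually parallel. Then $\textrm{Sing}(\LA_i)$ is empty, and I will treat that as the degenerate case excluded by the paper's standing assumptions. Hence $p\geq p_{\A}(\LA_i,2)$. Taking $p=p_{\A}(\LA,2)$ itself, this gives $p_{\A}(\LA,2)\geq p_{\A}(\LA_i,2)$ for every $i$.

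For the bound by $n-1$, choose one line $L^{(i)}\in\LA_i$ for each $i$. By hypothesis any two of these meet at a double point, so they form a generic sub-arrangement $\mathcal G_n$. By Remark~\ref{rem: double points LI}, the images $\phi(\mu_{L^{(i)}})$ must be pairwise linearly independent, so they generate $n$ distinct order-$p$ subgroups of $(\Z/p\Z)^2$. There are only $p+1$ such subgroups, so $p+1\geq n$, that is, $p\geq n-1$, exactly as in Example~\ref{example: general position k=2}.

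The main obstacle is the surjectivity bookkeeping in the first part, in particular the degenerate case of an all-parallel $\LA_i$ where $p_{\A}(\LA_i,2)$ is not really defined. I would dispose of it by the standing convention. Everything else is the restriction argument already given in Remark~\ref{rem: naive lower bounds affine}.
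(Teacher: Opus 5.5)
Your proposal is correct and is essentially the paper's proof. You restrict an anti-admissible $\phi$ to the multiplicity-preserving sub-arrangements $\LA_i$ and to the generic sub-arrangement $\mathcal G_n$ formed by one line from each piece, as in Remark~\ref{rem: naive lower bounds affine}, and you supply the multiplicity and surjectivity checks the paper leaves implicit.

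Two small fixes. First, two lines of $\LA_i$ meeting at a point of multiplicity $r\geq 3$ need not have independent images (cf.\ Lemma~\ref{lemma: triple points p=3}); surjectivity still holds, because if all images at such a point lay in one line $\langle g\rangle$ then so would their sum, violating condition (2). Second, an all-parallel $\LA_i$ is well defined and harmless: condition (2) is vacuous, so $p_{\A}(\LA_i,2)=2$ and that part of the inequality is trivial.
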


\begin{proof}
    We notice that $\mathcal G_n$ appears as a sub-arrangement of $\LA$ by picking a single line from each $\LA_i$. The result then follows from applications of the inequalities in the above remark.
\end{proof}

\section{Anti-admissibility for projective arrangements}\label{sec: projective admissibility}

Now we consider the setting of arrangements in $\cptwo$. The main additions in this case comes from the added relation between meridian classes in homology as well as the fact that all lines must now intersect. Of these, we will see multiple times that this condition on homology provides interesting differences compared to the above definitions and results (for instance, Example~\ref{ex: differing spectra ceva}). Also recall that we assume all projective arrangements to not be a pencil.

We start by considering the results above with true analogous statements before later showcasing some of the finer differences between these cases.

\begin{proposition}\label{prop: projective integers}
    Let $\LA=\{L_1,\dots,L_n\}$ be an arrangement in $\cptwo$. Then $\LA$ is $\Z^k$-anti-admissible for $2\leq k\leq n-1$.
\end{proposition}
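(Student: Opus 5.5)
The plan is to deal with $k=2$ by a genericity argument and then lift to larger $k$ the same way as in Proposition~\ref{prop: affine integers}. Since $H_1(\cptwo\setminus\LA)$ is generated by $\mu_1,\dots,\mu_n$ with the single relation $\sum_i\mu_i=0$, a homomorphism $\phi$ to $\Z^k$ is the same thing as a free choice of $v_i=\phi(\mu_i)$ for $i\leq n-1$, with $v_n=-\sum_{i<n}v_i$ forced.

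\textbf{Step 1: reduce to non-vanishing determinants.} For $k=2$, linear independence of two vectors in $\Z^2$ means their determinant is nonzero. So the anti-admissibility condition at a point $p$ on the lines indexed by $S$ (with $|S|\geq 2$), for a line $i\in S$, reads
$$D_{S,i}:=\det\Bigl(v_i,\sum_{j\in S}v_j\Bigr)\neq 0 .$$
I will treat the coordinates of $v_1,\dots,v_{n-1}$ as indeterminates. Each $D_{S,i}$ is then a bilinear polynomial in these indeterminates, and I need every one of them to be a nonzero polynomial. The non-pencil hypothesis says that no $S$ equals $\{1,\dots,n\}$. Using $v_n=-\sum_{j<n}v_j$, there are three cases.

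\begin{enumerate}
\item If $n\notin S$, then $D_{S,i}=\sum_{j\in S\setminus\{i\}}\det(v_i,v_j)$. This is a nonempty sum of distinct $2\times 2$ minors of independent vectors, so it is nonzero.
\item If $n\in S$ and $i\neq n$, then $D_{S,i}=-\det\bigl(v_i,\sum_{j\in T}v_j\bigr)$, where $T$ is the complement of $S$ in $\{1,\dots,n\}$. Here $T$ is nonempty because $\LA$ is not a pencil, $T\subset\{1,\dots,n-1\}$, and $i\notin T$. This is nonzero as in case~(1).
\item If $i=n$, then $D_{S,n}=-\det\bigl(\sum_{j\in U}v_j,\sum_{j\in S\setminus\{n\}}v_j\bigr)$, where $U=\{1,\dots,n-1\}\setminus S$. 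Both index sets are nonempty (the first again by the non-pencil hypothesis) and they are disjoint, so the expansion contains a monomial $\det(v_u,v_s)$ with $u\neq s$ that cannot cancel.
\end{enumerate}

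\textbf{Step 2: specialize to get surjectivity.} Set $v_1=\mathbf e_1$ and $v_2=\mathbf e_2$, so that $\phi$ is surjective onto $\Z^2$. I then check that each $D_{S,i}$ stays a nonzero polynomial in the remaining free vectors $v_3,\dots,v_{n-1}$. The reason is that a $\det(v_u,v_s)$ term either becomes $\det(\mathbf e_1,\mathbf e_2)=1$ or keeps a free coordinate. The only way a difference of such terms could collapse to zero would be cancellation among specialized terms, and this is excluded by the disjointness and monomial bookkeeping from Step~1. The case $n=3$, where nothing is free, I will verify directly: $v_3=-(\mathbf e_1+\mathbf e_2)$, and every pair of the three vectors is independent.

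\textbf{Step 3: choose integers.} There are finitely many nonzero polynomials $D_{S,i}$. A finite union of proper hypersurfaces cannot contain all integer points, so I can choose integer values for $v_3,\dots,v_{n-1}$ avoiding all of them. This gives a $\Z^2$-anti-admissible surjection.

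\textbf{Step 4: lift to $2\leq k\leq n-1$.} Mimicking the proof of Proposition~\ref{prop: affine integers}, I keep $v_1,\dots,v_{n-k+1}$ in $\Z^2\times 0$ and redefine $v_{n-j}=\mathbf e_{k-j+1}$ for the last $k-2$ free indices $j=1,\dots,k-2$, with $v_n$ again equal to minus the sum of the others. Equivalently, I rerun Steps~1--3 directly in $\Z^k$: fix $v_1,\dots,v_k$ to be the standard basis, and replace the determinant by the condition that $v_i\wedge\sum_{j\in S}v_j\neq 0$. This means some $2\times2$ minor is nonzero, and the same monomial argument shows the relevant polynomial is nonzero. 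The count of free indices works because $k\leq n-1$.

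The main obstacle I expect is the bookkeeping in Step~2 (and its $\Z^k$ version in Step~4): checking that fixing some of the $v_i$ to basis vectors never kills one of the polynomials. This matters most in case~(3), where $v_n$ involves everything, and for small $n$, which I would handle by direct inspection.
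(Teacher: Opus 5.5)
Your argument is correct, but it reaches the result by a different route from the paper.

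\textbf{The paper's route.} It works on a one-parameter slice, setting $\phi(\mu_i)=(1,q_i)$ for $i<n$. Each dependence then becomes an equation saying that some $q$ equals an average of earlier ones (or, at points of $L_n$, a rescaled sum with denominator $(r-1)-(n-1)$). It chooses each $q_{i+1}$ to be a prime outside the finite solution set $A_{i+1}$. At the step $i+1=n-1$ it adds extra equations to protect the forced vector $\phi(\mu_n)=(-(n-1),-\sum_{i<n}q_i)$. For $k\geq 3$ it only asserts that an explicit formula works.

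\textbf{Your route.} After substituting $v_n=-\sum_{j<n}v_j$, you reduce every condition to $\sum_{A}v_j\wedge\sum_{B}v_j\neq 0$ for disjoint nonempty $A,B\subseteq\{1,\dots,n-1\}$. The non-pencil hypothesis is exactly what makes the complementary set nonempty. This is a nonzero polynomial, so you pick an integer point off finitely many hypersurfaces. The gain is uniformity: $v_n$ needs no special case, all $k$ are handled the same way (as is the affine Proposition~\ref{prop: affine integers}), and a generic assignment works. The cost is that you do not get the explicit vectors the paper produces.

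\textbf{Points to make precise in the write-up.}
\begin{itemize}
\item In Step~2, the clean reason is linear independence under specialization. The polynomials $x_ay_b-x_by_a$ ($a<b$) stay linearly independent after setting $v_1=\mathbf e_1$, $v_2=\mathbf e_2$. They become the constant $1$, the distinct linear monomials $y_b$ and $-x_b$ ($b\geq 3$), and the unchanged quadratic ones for $a,b\geq 3$.
\item In Step~4, one fixed minor need not work for every condition. After $v_3=\mathbf e_3,\dots,v_k=\mathbf e_k$, the $(1,2)$-minor vanishes identically on conditions supported in $\{3,\dots,k\}$. So choose a surviving minor for each pair $(A,B)$:
  \begin{itemize}
  \item if $A\cup B\subseteq\{1,\dots,k\}$, use the $(a_0,b_0)$-minor with $a_0\in A$, $b_0\in B$;
  \item otherwise, use a minor containing a monomial in a free coordinate that cannot cancel.
  \end{itemize}
  Then avoid the zero set of the product of these chosen minors.
\item Your first version of Step~4 is not literally equivalent to the rerun, but it also works if $v_1,\dots,v_{n-k+1}$ come from your $\Z^2$-anti-admissible assignment. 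That assignment already forces $\sum_B v_j\neq 0$ in the mixed cases where only one of $A,B$ meets $\{n-k+2,\dots,n-1\}$.
\end{itemize}
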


\begin{proof}
    We start by modifying the construction in Proposition~\ref{prop: affine integers} by adding (at most finitely many) additional values to the sets $A_{i+1}$. Namely they are the integer solutions to the equations defined as follows:
    For any (possibly non-empty) subset $I\subset \{1,\dots,i\}$ consider the equations
    $$\frac{1}{(r-1)-(n-1)}\left(-\sum_{k\in I} q_k\right)=q$$
    and 
    $$\frac{1}{(r-1)-(n-1)}\left(-q-\sum_{k\in I\setminus \{l\}}q_k\right)=q_l.$$
    where $l$ and $r$ vary over $1,\dots,i$ and $2,\dots,n-1$, respectively  (note in principle, we could refine $r$ to be based on $|I|$, but this is unnecessary as we only care about checking finitely many equations).
    We then choose $q_{i+1}$ (and consequently $\phi(\mu_{i+1})$) in a similar manner to before except for two exceptional cases. 
    
    First, when $i+1=n-1$, we consider two additional sets of equations and further add their solutions to $A_{n-1}$. Namely, for $I\subset \{1,\dots, n-2\}$, the equations
    $$\frac{1}{-(n-1)}\left(-q-\sum_{j=1}^{n-2}q_j\right)=\frac{1}{(r-1)-(n-1)}\left(-\sum_{k\in I}q_k\right)$$
    and
    $$\frac{1}{-(n-1)}\left(-q-\sum_{j=1}^{n-2}q_j\right)=\frac{1}{(r-1)-(n-1)}\left(-q-\sum_{k\in I}q_k\right)$$
    which we again allow for $r=2,\dots,n-1$. We may then choose $q_{n-1}$ appropriately.    
    
    When $i+1=n$, we instead simply define 
    $$\phi(\mu_{n})=(-(n-1),-\sum_{i=1}^{n-1}q_i).$$
    With this change, $\sum_{i=1}^n\phi(\mu_i)=(0,0)$ and so $\phi$ is well defined. Moreover, it is clear the anti-admissibility condition is satisfied at any intersection point that is not on $L_n$ by the exact same argument as before. 

    Therefore, we consider a point $p=L_{i_1}\cap\dots\cap L_{i_{r}}$ with $i_r=n$. Letting $q_i$ be denoted by $q_{L_i}$, we have
    $$\sum_{j=1}^r\phi(\mu_{i_j})=((r-1)-(n-1),-\sum_{L\in \LA,\ p\not\in L}q_L)$$
    and note that neither coordinate is zero since $\LA$ is not a pencil. If for some $i_l$, $l\neq n$, we didn't have linear independence, then there are $a_l,b_l\in \Z$ with 
    $$a_l(1,q_{i_l})=b_l((r-1)-(n-1),-\sum_{L\in \LA,\ p\not\in L}q_L)$$
    which implies 
    $$q_{i_l}=\frac{1}{(r-1)-(n-1)}\left(-\sum_{L\in \LA,\ p\not\in L}q_L\right).$$
    However, if $m$ is the largest index such that $p\not\in L_m$, then 
    $$\sum_{L\in \LA,\ p\not\in L}q_L=\sum_{i\in I\subset \{1,\dots,m\}}q_i=q_m+\sum_{i\in I\cap \{1,\dots,m-1\}}q_i$$
    where $I$ is the set of indices of lines $L\in\LA,\ p\not\in L$ (recall by definition $i_l\not\in I)$. It then follows that $q_m\in A_m$, a contradiction.
    
    Finally suppose 
    $$a_n(-(n-1),-\sum_{i=1}^{n-1}q_i)=b_n((r-1)-(n-1),-\sum_{L\in \LA,\ p\not\in L}q_L)$$
    so that
    $$\frac{1}{-(n-1)}\left(-\sum_{i=1}^{n-1}q_i\right)=\frac{1}{(r-1)-(n-1)}\left(-\sum_{L\in \LA,\ p\not\in L}q_L\right).$$
    Then regardless of whether or not $p\in L_{n-1}$, we would find that $q_{n-1}\in A_{n-1}$, another contradiction. Consequently, the linear independence criterion is satisfied for all meridians of lines passing through $p$. And so $\LA$ is $\Z^2$-anti-admissible.

    We may also extend this homomorphism to have image $\Z^k$ for $3\leq k\leq n-1$ by a similar shift as given in the affine case, albeit with 
    $$\phi(\mu_n)=(-(n-1)+k-2,-\sum_{i=1}^{n-1-k+2}q_i,-1,\dots,-1).$$
    One can verify these extensions still satisfy anti-admissibility.
\end{proof}

\begin{lemma}\label{lemma: vertical extension projective}
    Suppose a line arrangement $\LA=\{L_1,\dots,L_n\}$ in $\cptwo$ is $(\Z/p\Z)^k$-anti-admissible for a fixed prime $p$. If\/ $k+1\leq n-1$, then $\LA$ is $(\Z/p\Z)^{k+1}$-anti-admissible.
\end{lemma}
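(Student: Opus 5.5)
The plan is to mimic the proof of Lemma~\ref{lemma: vertical extension affine}. The only new feature is the relation $\mu_1+\dots+\mu_n=0$ in $H_1(\cptwo\setminus\LA)$. The new last coordinate therefore has to be added on two lines with opposite signs, and this is where the stronger hypothesis $k+1\leq n-1$ is used.

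\textbf{Setup.} Let $\phi:H_1(\cptwo\setminus\LA)\rightarrow G=(\Z/p\Z)^k$ be the given anti-admissible surjection. Since $H_1$ is generated by the meridians, $\{\phi(\mu_1),\dots,\phi(\mu_n)\}$ spans $G$. After relabeling, $\{\phi(\mu_1),\dots,\phi(\mu_k)\}$ is a basis. Because $k+2\leq n$, there are at least two lines outside this basis, which we label $L_{k+1}$ and $L_n$. Define $\phi'$ into $G\oplus\Z/p\Z$ on the generators by
$$\phi'(\mu_i)=\big(\phi(\mu_i),\ \delta_{i,k+1}-\delta_{i,n}\big).$$

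\textbf{Well-definedness and surjectivity.} The only relation to check is $\sum_i\mu_i=0$. It is respected, since $\sum_i\phi(\mu_i)=0$ and the last coordinates sum to $1-1=0$. For surjectivity, the first $k$ images give $G\oplus 0$. Writing $\phi(\mu_{k+1})=\sum_{i\leq k}a_i\phi(\mu_i)$, we get $\phi'(\mu_{k+1}-\sum_{i\le k} a_i\mu_i)=(0,\dots,0,1)$, exactly as in the affine case.

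\textbf{Anti-admissibility.} Fix $q=p_{i_1,\dots,i_r}$ and set $S=\sum_m\phi(\mu_{i_m})$. Then $S\neq 0$, since $S$ is half of an independent pair for $\phi$. The sum $\sum_m\phi'(\mu_{i_m})$ equals $(S,\epsilon)$, where $\epsilon=0$ if $q$ lies on both or neither of $L_{k+1},L_n$, and $\epsilon=\pm1$ if $q$ lies on exactly one of them. There are three cases.
\begin{enumerate}
\item If $\epsilon=0$ and a meridian image has last coordinate $0$, independence of $(\phi(\mu_{i_l}),0)$ and $(S,0)$ is just the anti-admissibility of $\phi$.
\item If $\epsilon=0$ and the meridian is $\mu_{k+1}$ or $\mu_n$ (which happens only at $q=L_{k+1}\cap L_n$), a relation $a(\phi(\mu_{i_l}),\pm1)=b(S,0)$ forces $a=0$. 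Then $bS=0$ with $S\neq0$ forces $b=0$.
\item If $\epsilon=\pm1$, a meridian with last coordinate $0$ is clearly independent of $(S,\epsilon)$. For the meridian with last coordinate $\epsilon$, a relation $a(\phi(\mu_{i_l}),\epsilon)=b(S,\epsilon)$ gives $a=b$. If $a\neq0$ this yields $\phi(\mu_{i_l})=S$, contradicting anti-admissibility of $\phi$ at $q$.
\end{enumerate}

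\textbf{Main obstacle.} The only real point is arranging the sign-cancelling pair of lines and checking the point $L_{k+1}\cap L_n$, where the new coordinate cancels. Case (2) handles that point using only $S\neq0$. When $p=2$ the two signs coincide, and the argument goes through verbatim.
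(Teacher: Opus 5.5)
Your proposal is correct and follows the paper's proof: the paper also extends by the twisted last coordinate $\delta_{i,k+1}-\delta_{i,k+2}$ so the relation $\sum_i\mu_i=0$ is respected, and your use of $L_n$ instead of $L_{k+2}$ for the second non-basis line is only a relabeling. Your explicit check at the point $L_{k+1}\cap L_n$, where the new coordinates cancel and one needs $S\neq 0$, fills in a verification the paper leaves under ``proceeds in the same fashion.''
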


\begin{proof}
    The proof proceeds in the same fashion as Lemma~\ref{lemma: vertical extension affine}, with the exception that we must define the extension $\phi'$ so that $\sum_{i=1}^n\phi'(\mu_i)=0.$ To accomplish this, we assign the meridians as 
    $$\phi'(\mu_i)=(\phi(\mu_i),\delta_{i,k+1}-\delta_{i,k+2}).$$
    Using this, we see that $\phi'$ satisfies this required relation since $\sum_{i=1}^n\phi(\mu_i)$ is assumed to be trivial in $(\Z/p\Z)^k$. Moreover, $\{\phi'(\mu_1),\dots,\phi'(\mu_{k+1})\}$  will still generate $(\Z/p\Z)^k\oplus\Z/p\Z$, implying the surjectivity of $\phi'$.
\end{proof}

\begin{remark}
    Lemma~\ref{lemma: vertical extension projective}, in addition to Lemma~\ref{lemma: vertical extension affine} in the previous section, demonstrate that enlarging the group typically makes it more likely to find anti-admissible surjections (as one possibly expects, and provided $k+1$ does not exceed the rank of $H_1(Y\setminus \LA)$). An interesting question is then to ask if one can enlarge the group by changing the prime and still find anti-admissible surjections.  
\end{remark}

\begin{question}\label{question: horizontal extention possible}
    Suppose $\LA$ is $(\Z/p\Z)^k$-anti-admissible. Is $\LA$ $(\Z/q\Z)^k$-anti-admissible for all primes $q>p$?
\end{question}

As most of the arguments in this paper utilize combinatorial arguments to guarantee anti-admissibility provided enough elements are in the desired group, effectively all bounds considered will provide the property described in Question~\ref{question: horizontal extention possible}. However, for small $p$ (at least when compared to the number of lines in an arrangement), we are neither aware of a counterexample to this question nor do we know of a method to extend a morphism with image $(\Z/p\Z)^k$ to one with image $(\Z/q\Z)^k$ (while retaining various linear independence conditions). Nonetheless, a positive answer to this question will have a meaningful impact on our ability to compute the smooth spectra of an arrangement, as we will then only need to find (possibly) a single anti-admissible surjection compared to one for each prime. 

Unfortunately, the argument structure in Theorem~\ref{theorem: affine mod p} does not immediately generalize in the projective case. Namely, the proof in the previous section cannot guarantee $\sum_ic_ig_{m_i}=0$ since both the $c_i$ and $g_{m_i}$ are chosen to explicitly result in various sums being non-trivial. Nonetheless, as we expect the extreme case to correspond to arrangements of general lines (as motivated by Conjecture~\ref{conj: affine vs general} and the fact that most known examples have comparatively small primes), we expect an analogous result to hold. This is further suggested by considering the following.

\begin{theorem}\label{theorem: projective 2 generic}
    Let $\LA=\{L_1,\dots,L_n\}$ be an arrangement in $\cptwo$ that contains at least two generic lines. If $p> n$, then $\LA$ is $(\Z/p\Z)^2$-anti-admissible. 
\end{theorem}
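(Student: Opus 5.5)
The plan is to treat the two generic lines separately. We run the affine inductive construction of Theorem~\ref{theorem: affine mod p} on the remaining lines, and then use the projective relation $\sum_i\phi(\mu_i)=0$ to \emph{force} the image of the last meridian. Relabel so that $L_{n-1}$ and $L_n$ are the two generic lines, and let $\LA'=\{L_1,\dots,L_{n-2}\}$. Every point of $\textrm{Sing}(\LA)$ lying on $L_{n-1}$ or $L_n$ is a double point. So by Remark~\ref{rem: double points LI}, the anti-admissibility conditions at these points only ask that $\phi(\mu_{n-1})$ and $\phi(\mu_n)$ be linearly independent from each other and from every $\phi(\mu_i)$ with $i\leq n-2$. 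Every other point of $\textrm{Sing}(\LA)$ is a point of $\textrm{Sing}(\LA')$ with the same set of lines through it. Hence it suffices to find $\phi(\mu_1),\dots,\phi(\mu_{n-2})$ satisfying the anti-admissibility condition at all points of $\textrm{Sing}(\LA')$; surjectivity is not needed at this stage.

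\emph{Step 1.} Assign $\phi(\mu_i)=c_ig_{m_i}$ for $i\leq n-2$ by exactly the greedy procedure in the proof of Theorem~\ref{theorem: affine mod p}. That procedure needs $p+1>i-1$ free subgroups and $p-1>i-1$ free coefficients at step $i$, and both hold since $p>n$. We additionally require $S:=\sum_{i\leq n-2}\phi(\mu_i)\neq 0$. To get this, when choosing $c_{n-2}$ we exclude the (at most one) value making $S=0$. This excludes at most $(n-3)+1<p-1$ values, so a choice remains. If $n-2=1$ the sub-arrangement is a single line, and we simply take $\phi(\mu_1)=g_1$.

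\emph{Step 2.} Let $D$ be the set of at most $n-2$ order $p$ subgroups $\langle\phi(\mu_i)\rangle$, $i\leq n-2$. Choose a generator $v$ of an order $p$ subgroup not in $D\cup\{\langle S\rangle\}$. This is possible because $(\Z/p\Z)^2$ has $p+1>n-1$ such subgroups. Then set $\phi(\mu_{n-1})=tv$ and $\phi(\mu_n)=-S-tv$ for some $t\in\{1,\dots,p-1\}$ still to be chosen; the relation $\sum_i\phi(\mu_i)=0$ then holds automatically. Since $v$ and $S$ are independent, $S+tv\neq 0$. Moreover, the same computation used for the elements $h^q_c$ in the proof of Theorem~\ref{theorem: affine mod p} shows that the subgroups $\langle S+tv\rangle$ are pairwise distinct as $t$ varies, and none of them equals $\langle v\rangle$. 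Each subgroup in $D$ therefore rules out at most one value of $t$. Since $p-1>n-2\geq|D|$, some $t$ survives. For that $t$, $\phi(\mu_n)$ is nonzero, independent of $\phi(\mu_{n-1})$, and independent of every $\phi(\mu_i)$ with $i\le n-2$. Also $\phi(\mu_{n-1})$ is independent of each $\phi(\mu_i)$ with $i\leq n-2$, since $\langle v\rangle\notin D$. Surjectivity follows because $\phi(\mu_1)$ and $\phi(\mu_{n-1})$ are independent.

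The main obstacle is Step 2: making the forced value $\phi(\mu_n)=-S-\phi(\mu_{n-1})$ avoid every used direction. A naive count of bad elements $x=\phi(\mu_{n-1})$ in $(\Z/p\Z)^2$ only gives the result for $p$ of order $2n$. The fix is to first fix the direction $\langle v\rangle$ and then vary only the scalar $t$, which turns the count into an injectivity statement about $t\mapsto\langle S+tv\rangle$. That statement requires $S\neq0$, which is exactly why Step 1 imposes this extra exclusion.
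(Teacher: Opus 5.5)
Your proof is correct and follows the same strategy as the paper: place the two generic lines last, run the greedy construction from Theorem~\ref{theorem: affine mod p} on $L_1,\dots,L_{n-2}$ (adjusting $c_{n-2}$ so that $S=\sum_{i\le n-2}\phi(\mu_i)\neq 0$), and let the relation $\sum_i\phi(\mu_i)=0$ force the last image, using that only pairwise independence is required at the double points on generic lines. The one difference is the final step: the paper reserves the directions $\langle(1,0)\rangle$, $\langle(0,1)\rangle$, $\langle(1,p-1)\rangle$ during the greedy stage, writes $\phi(\mu_{n-1}),\phi(\mu_n)$ explicitly, and splits into cases according to which coordinates of $S=(a,b)$ vanish; you instead pick a fresh direction $v$ afterwards and use injectivity of $t\mapsto\langle S+tv\rangle$, which removes both the reserved directions and the case analysis.
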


\begin{proof}
    Let $L_{n-1}$ and $L_n$ denote two generic lines of $\LA$. For $i=1,\dots,n-2$, define $\phi(\mu_i)$ in an equivalent manner as the first part of the proof of Theorem~\ref{theorem: affine mod p}. We note that since $i-1\leq n-3<(p+1)-3$, we can assume each $g_{m_i}$ is neither $(1,0)$ nor $(0,1)$ nor $(1,p-1)$ .
    Let $(a,b)=\sum_{i=1}^{n-2}\phi(\mu_i)$. 
    
    Now, if $a=b=0\pmod{p}$, since $(n-2)-1<p-1$, we may modify our choice of $c_{n-2}$. In doing so, we will change both $a$ and $b$ to be non-zero in $\Z/p\Z$. We then set $\phi(\mu_{n-1})=(-a,0)$ and $\phi(\mu_{n})=(0,-b)$ so that $\sum_{i=1}^{n}\phi(\mu_i)=(0,0)$ and $\phi$ is a well defined surjection. As $L_{n-1}$ and $L_n$ are generic lines, they only contain double points and so Example~\ref{example: general position k=2} implies the linear independence condition is met at each intersection since no other meridian of lines correspond to the subgroups $\langle (1,0)\rangle$ and $\langle (0,1)\rangle$.

    If exactly one of $a$ or $b$ is zero in $\Z/p\Z$ (without loss we suppose $b=0$), then we set $\phi(\mu_{n-1})=-ag_{p-1}=(-a,-a(p-1))$ and $\phi(\mu_n)=(0,a(p-1)).$ We again find that $\sum_{i=1}^{n}\phi(\mu_i)=(0,0)$ and $\phi(\mu_{n-1})$ and $\phi(\mu_n)$ generate subgroups distinct from all other meridian images.

\end{proof}

\begin{remark}\label{rem: freedom of coefficients}
    The proof above demonstrates that provided we have some freedom in choosing the coefficient $c_i$, we can hope to obtain a bound on the prime $p$. In fact, it can be shown that for arrangements where all lines are generic the minimum possible value of $p$ is achievable (as there are no restrictions on the $c_i$ coming from the linear independence conditions). That is, if $p\geq n-1$, then $\mathcal G_n$ is $(\Z/p\Z)^2$-anti-admissible.  
\end{remark}

Corollary~\ref{cor: finite values} is also still true in the projective case as a consequence of a version of Theorem~\ref{theorem: projective 2 generic} that has been weakened to apply to all arrangements. A proof of this result (stated below) can be found in the author's thesis \cite{harristhesis}, although stated in different language. 

\begin{theorem}\label{theorem: thesis bound}
{\rm (cf.~Theorem 6.4.1 \cite{harristhesis})}
    Let $\LA$ be a line arrangement in $\cptwo$. For sufficiently large $p$, $\LA$ is $(\Z/p\Z)^2$-anti-admissible.
\end{theorem}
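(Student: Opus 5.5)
The plan is to deduce this from the integral case by reduction modulo $p$. By Proposition~\ref{prop: projective integers} with $k=2$ (available since $\LA$ is not a pencil, so $n\geq 3$ and $2\leq n-1$), there is a $\Z^2$-anti-admissible surjection
$$\phi:H_1(\cptwo\setminus\LA)\rightarrow \Z^2 .$$
Let $\pi_p:\Z^2\rightarrow(\Z/p\Z)^2$ be reduction modulo $p$, and set $\phi_p=\pi_p\circ\phi$. I will show that $\phi_p$ is $(\Z/p\Z)^2$-anti-admissible for every prime $p$ larger than an explicit constant depending only on $\phi$.

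First, $\phi_p$ is a well-defined homomorphism. Since $\phi$ already factors through the relation $\sum_i\mu_i=0$ in $H_1(\cptwo\setminus\LA)$, so does $\phi_p$. It is surjective because $\phi$ is surjective and $\pi_p$ is surjective.

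Second, I translate the anti-admissibility condition into the non-vanishing of finitely many integers. For each point $q=p_{i_1,\dots,i_r}\in\textrm{Sing}(\LA)$ and each $l=1,\dots,r$, define
$$D_{q,l}=\det\Bigl(\phi(\mu_{i_l}),\ \sum_{m=1}^r\phi(\mu_{i_m})\Bigr)\in\Z .$$
For two vectors in $\Z^2$, linear independence over $\Z$ (no non-trivial integer relation $a_1v=a_2w$ with $a_1,a_2$ not both zero) is equivalent to linear independence over $\mathbb Q$, hence to a non-zero determinant. So anti-admissibility of $\phi$ says exactly that every $D_{q,l}\neq 0$.

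Over the field $\Z/p\Z$, which is the setting of the group $(\Z/p\Z)^2$, the corresponding pair is independent if and only if $D_{q,l}\not\equiv 0\pmod p$. This holds because the determinant commutes with reduction modulo $p$. There are only finitely many pairs $(q,l)$, since $|\textrm{Sing}(\LA)|$ is finite and each point lies on at most $n$ lines. Set
$$N=\max_{q,l}|D_{q,l}| .$$
For every prime $p>N$, no $D_{q,l}$ is divisible by $p$, so condition (2) holds for $\phi_p$. Condition (1) then follows automatically, since a vector in an independent pair is non-zero. Hence $\LA$ is $(\Z/p\Z)^2$-anti-admissible for all primes $p>N$.

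I do not expect a serious obstacle here; all the real difficulty is absorbed into Proposition~\ref{prop: projective integers}. The one point needing care is the interpretation of ``linearly independent'' in $\Z^2$, where coefficients are integers rather than field elements, compared with $(\Z/p\Z)^2$, a genuine vector space. For $2\times 2$ integer data both notions reduce to the determinant criterion above.

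The resulting bound on $p$ is inexplicit, because it depends on the primes $q_i$ produced in the proof of Proposition~\ref{prop: projective integers}. That is consistent with the statement, which only claims ``sufficiently large $p$''.
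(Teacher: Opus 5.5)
Your argument is correct, but it takes a different route from the one the paper points to.

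\textbf{The paper's route.} The paper does not prove this theorem in the text. It defers to the thesis and describes the result as a weakened version of Theorem~\ref{theorem: projective 2 generic}. That is, it works directly in $(\Z/p\Z)^2$ in the style of Theorem~\ref{theorem: affine mod p}: each line gets an order $p$ subgroup $H_{m_i}$ and a coefficient $c_i$ avoiding finitely many bad values. The last images are then adjusted so that $\sum_i\phi(\mu_i)=0$, which is the step the paper says breaks the affine argument in $\cptwo$.

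\textbf{Your route.} You put all the combinatorics into the integral statement of Proposition~\ref{prop: projective integers}, which makes the passage to $(\Z/p\Z)^2$ formal. Anti-admissibility of $\phi$ becomes the nonvanishing of finitely many integers $D_{q,l}$. Reduction modulo any prime $p>\max|D_{q,l}|$ preserves this, along with surjectivity and the relation $\sum_i\mu_i=0$. Condition (1) is also fine, since in $\cptwo$ every line passes through a point of $\textrm{Sing}(\LA)$.

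\textbf{What each buys.} Your argument is short and clean. However, the threshold $N$ depends on the integers $q_i$ produced by the prime-avoidance recursion. So it is inexplicit and says nothing about the size of $p_{\mathbb P}(\LA,2)$. The direct construction controls $p$ in terms of the arrangement, which is what the smooth-spectrum questions are about.

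Your algebraic viewpoint can give an explicit bound with little extra work. Let $I_q$ be the set of indices of lines through $q$, and $r_q=|I_q|$. Each $\det\bigl(v_i,\sum_{j\in I_q}v_j\bigr)$ is a quadratic form on $\{\sum_j v_j=0\}$ that is nonzero modulo every prime. To see this, pick some $j\in I_q\setminus\{i\}$ and a line $L_m$ missing $q$ (it exists because $\LA$ is not a pencil). Evaluate at $v_i=\mathbf{e}_1$, $v_j=\mathbf{e}_2$, $v_m=-\mathbf{e}_1-\mathbf{e}_2$, with all other $v$'s zero; the value is $1$. The product of these forms has degree $2\sum_q r_q\leq 2n(n-1)$, since $\sum_q\binom{r_q}{2}=\binom{n}{2}$. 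By Schwartz--Zippel it has a non-root over $\Z/p\Z$ whenever $p>2n(n-1)$. Such a non-root is directly a $(\Z/p\Z)^2$-anti-admissible surjection.
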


This result above also allows the smooth spectrum of an arrangement to be well defined when considered in the projective setting.

\begin{definition}
   Fix an arrangement $\LA=\{L_1,\dots,L_n\}$ in $\cptwo$. For $2\leq k\leq n-1$, let $p_{\mathbb P}(\LA,k)$ denote the minimum prime $p_k$ for which $\LA$ is $(\Z/p\Z)^k$-anti-admissible for all primes $p\geq p_k$. We will refer to the set $\{p_{\mathbb P}(\LA,2),\dots, p_{\mathbb P}(\LA,n-1)\}$ as the \emph{(projective) smooth spectrum} of $\LA$. 
\end{definition}

Aside from simply asking Question~\ref{question: reasonable bounds} in this new context, or restating Conjecture~\ref{conj: affine vs general}, we will instead focus on the question of how the smooth spectra differ between the affine and projective case. To do this, we will start by recalling some concepts that will allow us to move between the two regimes.

For an affine arrangement $\LA\subset \C^2$, $\Proj(\LA)$ will denote the (unique) projectivization of $\LA$. On the other hand, if the arrangement is instead projective ($\LA\subset \cptwo$), we will let $\Aff(\LA)_H$ correspond to the affine arrangement $\LA\setminus (\LA\cap H)$ in $\cptwo\setminus H$, where $H$ is a hyperplane of $\cptwo$. We note that if $H\subset \cptwo$ is chosen \emph{generically}, then for suitable affine coordinates on $\cptwo\setminus H$, the resulting arrangement of curves $\Aff(\LA)_H$ will have the same incidence matrix as $\LA$. However, different choices for $H$ may in principle result in different arrangements (recalling the work of Rybnikov \cite{rybnikovfund}). Regardless, since the spectra as defined above are completely determined by the incidence matrix of $\LA$, we will simply write $p_{\A}(\Aff(\LA),k)$ to refer to $p_{\A}(\Aff(\LA)_H,k)$ for any generic $H$. Lastly, we highlight that independent of the choice of generic hyperplane, we will still have $\Proj(\Aff(\LA)_H)=\LA$ (and consequently $p_{\mathbb P}(\Proj(\Aff(\LA)),k)=p_{\mathbb P}(\LA,k)$) for arrangements in $\cptwo$. On the other hand, in general for arrangements in $\C^2$, $p_{\A}(\Aff(\Proj(\LA)),k)\neq p_{\A}(\LA,k)$, as $\Aff(\Proj(\LA))_H$ and $\LA$ need not have the same intersection matrix, even for generic $H$. 

\begin{theorem}\label{theorem: aff proj compared}
    Let $\LA=\{L_1,\dots,L_n\}\subset \cptwo$ be a projective arrangement. Then $p_{\A}({\rm Aff}(\LA),k)\leq p_{\mathbb P}(\LA,k)$ for all $2\leq k\leq n-1$.
\end{theorem}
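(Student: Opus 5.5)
The plan is to show that every projective anti-admissible surjection pulls back directly to an affine one, with no change in the images of the meridians. Concretely, I will prove: for any prime $p$ and any $2\leq k\leq n-1$, if $\LA$ is $(\Z/p\Z)^k$-anti-admissible, then $\Aff(\LA)_H$ is $(\Z/p\Z)^k$-anti-admissible for a generic hyperplane $H$.

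Granting this, the inequality follows from the definitions of the spectra. Every prime $p\geq p_{\mathbb P}(\LA,k)$ is then one for which $\Aff(\LA)$ is $(\Z/p\Z)^k$-anti-admissible. Since $p_{\A}(\Aff(\LA),k)$ is the least threshold beyond which anti-admissibility holds for all primes, we get $p_{\A}(\Aff(\LA),k)\leq p_{\mathbb P}(\LA,k)$. Note also that $k\leq n-1\leq n=|\Aff(\LA)_H|$, so $k$ lies in the range where the affine spectrum is defined.

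First I fix the geometry. Choose $H$ generic, so in particular $H\notin\LA$ and $H$ avoids $\textrm{Sing}(\LA)$. Then
$$\cptwo\setminus(\LA\cup H)=\C^2\setminus\Aff(\LA)_H.$$
Since $H$ meets the lines of $\LA$ in $n$ distinct points, no two affine lines become parallel. Hence $\textrm{Sing}(\Aff(\LA)_H)=\textrm{Sing}(\LA)$, and each point keeps the same multiplicity and the same index set $\{i_1,\dots,i_r\}$. This is the step that needs the most care: genericity of $H$ must be used to rule out new parallel classes and lost intersection points. Without it the combinatorial conditions in Definition~\ref{def: elementary anti-admissible} would not match up.

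Next I define the pulled-back map. The inclusion
$$\iota:\C^2\setminus\Aff(\LA)_H\hookrightarrow\cptwo\setminus\LA$$
induces $\iota_*:H_1(\C^2\setminus\Aff(\LA)_H)\cong\Z^n\to H_1(\cptwo\setminus\LA)\cong\Z^{n-1}$, which sends the meridian $\mu_i$ of the affine line to the meridian $\mu_i$ of $L_i$. Using the presentations recalled in \S\ref{sec:background}, $\iota_*$ is just the quotient by $\sum_i\mu_i$, so it is surjective. Given a $(\Z/p\Z)^k$-anti-admissible surjection $\phi$ for $\LA$, I set $\phi'=\phi\circ\iota_*$. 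This is a surjection, and it satisfies $\phi'(\mu_i)=\phi(\mu_i)$ for every $i$.

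Finally I check the anti-admissibility conditions. For each point $p_{i_1,\dots,i_r}$, the pair $\{\phi'(\mu_{i_l}),\sum_m\phi'(\mu_{i_m})\}$ is literally the same pair as in the projective case, so linear independence is inherited. Condition (1) holds for the same reason, since $\phi'(\mu_i)=\phi(\mu_i)\neq 0$. Hence $\phi'$ is anti-admissible, which proves the claim above and with it the theorem.
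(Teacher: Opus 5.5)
Your proposal is correct and matches the paper's proof. The paper likewise composes $\phi$ with $H_1(\C^2\setminus\Aff(\LA)_H)\cong H_1(\cptwo\setminus(\LA\cup H))\to H_1(\cptwo\setminus\LA)$, the second map killing $\mu_H$, which is exactly your $\iota_*$; meridian images and hence all anti-admissibility pairs are unchanged, and your explicit check that a generic $H$ preserves $\textrm{Sing}(\LA)$ and its multiplicities is only implicit in the paper.
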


\begin{proof}
    For $p\geq p_{\mathbb P}(\LA,k)$, let $\phi:H_1(\cptwo\setminus\LA;\Z)\rightarrow (\Z/p\Z)^k$ be an anti-admissible surjection. Pick any generic hyperplane of $\cptwo$ (so that $H$ intersects $\LA$ at only double points). Then we have a sequence of homomorphisms

    \begin{tikzcd}
    \hspace{-0.4cm}H_1((\C^2\setminus \Aff(\LA)_H) \arrow[r] \arrow[r, "\cong"] & H_1(\cptwo\setminus (\LA\cup H)) \arrow[r] \arrow[r, "\psi"] & H_1(\cptwo\setminus\LA) \arrow[r] \arrow[r, "\phi"] & (\Z/p\Z)^k,
    \end{tikzcd}
    with the first isomorphism given by sending meridians around lines in $\C^2$ to meridians around the respective lines in $\cptwo$ via  
    \begin{equation*}
        H_1(\C^2\setminus \Aff(\LA)_H)\cong H_1((\cptwo\setminus H)\setminus (\LA\setminus (\LA\cap H))) \cong H_1(\cptwo\setminus (\LA\cup H)).
    \end{equation*}
    Note here that under this isomorphism, we have $\sum{\mu_i}\mapsto -\mu_H$.
    Furthermore, the homomorphism $\psi$ sends $\mu_L\mapsto \mu_L$ for each $L\in \LA$ and $\mu_H\mapsto 0$. By considering $\phi'$ as the composition, we arrive at our desired anti-admissible surjection for $\Aff(\LA)_H$.
\end{proof}

In general the spectra of an arrangement will not agree between the affine and projective case (at least for small values of $k$). This fact is a result from the additional relation the image of meridians must satisfy in the projective case.

\begin{example}\label{ex: differing spectra ceva}
    Let $\LA$ be the Ceva(4) arrangement (cf.~\cite{dolgachev} or \cite{barthelhirzebruch}) consisting of 12 lines, along which the $t_3=16$ triple points and $t_4=3$ 4-fold points are distributed evenly in the sense that each line contains exactly four triple points and one 4-fold point. A schematic of this (non-real) arrangement is provided in Figure~\ref{fig: ceva} below. We note that \emph{admissibility} for other Ceva arrangements has been considered in the past in \cite{bartoloetal}.

    \begin{figure}[h]
    \begin{center}
    \includegraphics[scale=.65]{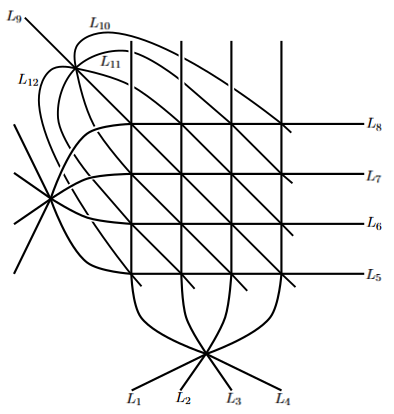}
    \caption{The Ceva(4) arrangement}
    \label{fig: ceva}
    \end{center}
    \end{figure}

    Now, lemma~\ref{lemma: triple points bad} implies $p_{\A}(\Aff(\LA),2)$ and $p_{\mathbb P}(\LA,2)$ are both at least 3. In fact, the assignment 
    \begin{equation*}
        \phi(\mu_i)=\begin{cases}
            (1,0) \quad i=1,3,5,7,10,12\\
            (0,1) \quad i=2,4,9,11\\
            (1,1) \quad i=6,8.
        \end{cases}
    \end{equation*}
    shows that the (affine) Ceva(4) is $(\Z/p\Z)^2$-anti-admissible for all $p\geq 3$

    Now, let $\phi:\{\mu_1,\dots,\mu_{12}\}\rightarrow (\Z/3\Z)^2\setminus\{(0,0)\}$ be any assignment for which the linear independence conditions of Definition~\ref{def: elementary anti-admissible} are satisfied for all intersection points. We will show that any such an assignment never defines a homomorphism.
    
    First, at any of the 4-fold points, the various linear independence conditions imply the images of the corresponding meridians will generate either 2 or 3 distinct order $3$ subgroups. We will begin with by distinguishing one of the 4-fold points (say  $p=p_{1,2,3,4}$), although up to symmetry, this choice does not meaningfully impact the arguments that follow. 
    
    We will also use $a,b,c,d\in (\Z/3\Z)^2$ to denote elements generating distinct subgroups. Then the possible images will correspond to an assignment of these elements (or their inverse) and will be covered on a case by case basis (up to permutation). Starting with there being three distinct subgroups generated by the images:

    \emph{Case 1.a:}  There are 3 distinct images $(\mu_1,\mu_2,\mu_3,\mu_4)\mapsto (a,a,b,c)$.

    Similar to Lemma~\ref{lemma: triple points p=3}, $-a,b$ and $c$ cannot each be pairwise linearly independent to $-a+b+c=2a+b+c$ since they generate distinct subgroups. It follows that this case can never occur if $\phi$ is assumed to be anti-admissible.

    \emph{Case 1.b:} There are 4 distinct images $(\mu_1,\mu_2,\mu_3,\mu_4)\mapsto (a,-a,b,c)$.

    Any line $L_i$ in the arrangement not passing through $p_{1,2,3,4}$ intersects each of the lines $L_1,\dots,L_4$ at a triple point. By considering the proof of Lemma~\ref{lemma: triple points p=3}, it follows that $\phi(\mu_i)\not\in\{-\phi(\mu_1),\dots,-\phi(\mu_4)\}=\{a,-a,-b,-c\}$. Further applications of Lemma~\ref{lemma: triple points p=3} then implies that $\phi(\mu_9)=\phi(\mu_5)=\phi(\mu_6)$ in addition to $\phi(\mu_{10})=\phi(\mu_5)$ and $\phi(\mu_{12})=\phi(\mu_6)$. Combining these conclusions, we find 
    $\sum_{i=9}^{12}\phi(\mu_i)=\phi(\mu_{11})$, a contradiction to the anti-admissibility condition at $p_{9,10,11,12}.$ 

    We may now suppose the meridians at any of the 4-fold points will correspond to exactly two distinct subgroups. In this setting, there are a few more options for $\phi$ (again, up to permutations). The argument for the (non-trivial) case \emph{2.a.2} is provided below, whereas the other cases follow immediately by computing $\sum_{i=1}^4\phi(\mu_i)$ and considering linear independence (cases \emph{2.a.1}, \emph{2.b.2} and \emph{2.c}) or by using a similar argument to case \emph{2.a.2} (case \emph{2.b.1}).   

    \emph{Case 2.a.1:} There are 2 distinct images $(\mu_1,\mu_2,\mu_3,\mu_4)\mapsto (a,a,a,b)$.

    \emph{Case 2.a.2:} There are 2 distinct images $(\mu_1,\mu_2,\mu_3,\mu_4)\mapsto (a,a,b,b)$.

    We now consider the images of the meridians $\mu_5,\dots,\mu_8$. These images must also generate exactly two distinct subgroups of $(\Z/3\Z)^2$. That is, up to the specific choices for these images, we also have a further two cases:
    
    First if $\{\mu_5,\mu_6,\mu_7,\mu_8\}\mapsto \{x,x,y,y\}$, then up to renaming we can assume $\phi(\mu_1)=a$ and $\phi(\mu_5)=x$. Therefore by Lemma~\ref{lemma: triple points p=3}, $\phi(\mu_9)\in \{a,x\}$. However, of the three remaining triple points on $L_9$, at least one must correspond to lines with meridians being sent to $b$ and $y$, which forces $\phi(\mu_9)\in \{b,y\}$, which is impossible. Consequently, $\{a,b,x,y\}$ cannot be a set of four distinct elements. 
    
    If this set consists of only two elements, so $\{a,b\}=\{x,y\}$. Then we must have  $\phi(\mu_i)\in \{a,b\}$ for each line $i\geq 9$. Moreover, there are lines $L_i,L_j$ with $1\leq i\leq 4$, $5\leq j\leq 8$ so that $\phi(\mu_i)=\phi(\mu_j)=a$. Let $L_k$ be the line passing through the triple point $p_{i,j,k}$. Then we must have $\phi(\mu_k)=b$. However, since $L_k$ must also intersect all other lines of index $\{1,\dots,8\}\setminus\{i,j\}$, at three distinct triple points, we see at least one such triple point must correspond to lines with meridians with images both $b$. At this point, linear independence is impossible.

    Alternatively, suppose $\{\mu_5,\mu_6,\mu_7,\mu_8\}\mapsto \{a,a,c,c\}$ for some distinct $c$ (which cannot be $-a$ nor $-b$). When we consider the points where the lines $L_j$, $5\leq j\leq 8$ intersect $L_1$, two of the intersection points correspond to lines with images $\phi(\mu_j)=a$. In this case the line $L_k$, $9\leq k\leq 12$ passing through either of these two points must have $\phi(\mu_k)\neq a$, and so $\phi(\mu_k)\in \{b,c\}$ when considering the other possible triple points on $L_k$. For the other two points on $L_1$, we find that the meridian $\mu_k$, $9\leq k\leq 12$ must satisfy $\phi(\mu_k)\in\{a,c\}$. However, when we further consider the other triple points on such an $L_k$, if $\phi(\mu_k)=c$, there is only one possible intersection with lines whose meridians map to $\{b,c\}$. That is, one of the intersections must correspond to the images $\{a,b\}$ which implies $\phi(\mu_k)\neq c$. Combining everything, as $p_{9,10,11,12}$ is a 4-fold point, we know that there can be only two subgroups generated by the corresponding images, and since case \emph{2.a.1} can be excluded, the images of $\{\mu_9,\mu_{10},\mu_{11},\mu_{12}\}$ are either $\{a,a,b,b\}$ or $\{a,a,c,c\}$. We then find in these respective cases that
    \begin{equation*}
        \sum_{i=1}^{12}\phi(\mu_i)=\begin{cases}
            6a+4b+2c\\
        6a+2b+4c
        \end{cases}
        =\begin{cases}
            b+2c\\
        2b+c\\
        \end{cases}
    \end{equation*}
    is non-trivial, as $b$ and $c$ generate distinct non-trivial subgroups. That is, any assignment satisfying all linear independence conditions will not generate a well defined homomorphism (for this case).

    If $\{\mu_5,\mu_6,\mu_7,\mu_8\}\mapsto \{x,x,-x,y\}$, then for any of the lines $L_i$, with $9\leq i\leq 12$, we cannot have $\phi(\mu_i)\in\{x,-x\}$ since $L_i$ intersects each of $L_5,\dots,L_8$ at a triple point. Furthermore, when we consider the intersection points on $L_1$, it follows that $\phi(\mu_i)=\phi(\mu_1)$ for at least three of the lines $L_9,\dots,L_{12}$, those corresponding to the intersections where $\phi(\mu_j)\in\{x,-x\}$, $5\leq j\leq 8$. Let $k\in\{9,\dots,12\}$ denote the line containing $L_1\cap L_j$, with $\phi(\mu_j)=y$ (and $j\in\{5,\dots,8\}$), then 
    $\sum_{i=9}^{12}\phi(\mu_i)=\phi(\mu_k)$, which is impossible.

    \emph{Case 2.b.1:} There are 3 distinct images $(\mu_1,\mu_2,\mu_3,\mu_4)\mapsto (a,a,-a,b)$.

    The argument in the previous case also shows that this is impossible (consider intersections on the line $L_5$ instead of $L_1$).

    \emph{Case 2.b.2:} There are 3 distinct images $(\mu_1,\mu_2,\mu_3,\mu_4)\mapsto (a,-a,b,b)$.

    \emph{Case 2.c:} There are 4 distinct images $(\mu_1,\mu_2,\mu_3,\mu_4)\mapsto (a,-a,b,-b)$.

    By considering all cases above, we see that any assignment of images either does not satisfy the linear independence conditions at each intersection point or it does not satisfy $\sum\phi(\mu_i)=0$ (and so it does not generate a homomorphism). Consequently, $p_{\mathbb{P}}(\LA,2)>3.$

    Now define $\phi:\{\mu_1,\dots,\mu_{12}\}\rightarrow(\Z/p\Z)^2\setminus\{(0,0)\}$ by
    \begin{equation*}
        \phi(\mu_i)=\begin{cases}
            (1,0) \quad &i=1,2,3,5,6,7\\
            (0,1) \quad &i=4,8,10,12\\
            (0,2) \quad &i=11\\
            (p-6,p-6)  &i=9
        \end{cases}
    \end{equation*}
    for primes $p\geq 5.$ By definition, $\phi$ generates a surjective homomorphism and since $p\neq 3$, each image is non-trivial. One can also check that at each intersection point, the various linear independence conditions are satisfied (this is most easily observed by considering the intersection of the corresponding generated subgroups). Consequently, $p_{\mathbb P}(\LA,2)=5\neq p_{\A}(\Aff(\LA),2)=3$. 
\end{example}

\begin{remark}\label{rem: algorithm scaling}
    Returning to Remark~\ref{rem: exhaustively check}, computer algorithms can be utilized to compute the smooth spectrum of a line arrangement. Unfortunately, the scaling for (albeit naive) algorithms explodes, as generic (sub)-arrangements will naturally enlarge the search space (Remark~\ref{rem: naive lower bounds affine}). That being said, Example~\ref{ex: differing spectra ceva} shows that arrangements may exhibit sufficient symmetry to reduce the required cases to levels that can be feasibly checked (at times by hand). Also somewhat surprisingly, while generic sub-arrangements will naturally require larger spectra, this is often counter-balanced by the fact that their images can be chosen to be any generator of a specified subgroup (recalling Remark~\ref{rem: freedom of coefficients}), making it easier to assign images which satisfied the requirements for anti-admissibility for non-generic lines. Consequently, we would be very interested in the discovery of efficient methods (if they exist) for computing the smooth spectra of line arrangements (especially in the projective case). This desire is further highlighted by the fact that the existence of anti-admissible surjections is closely tied to the construction of smooth 4-manifolds (Proposition~\ref{prop: main correspondence} below).  
\end{remark}

As demonstrated, the affine and projective spectra of an arrangement may not be equal. We are then left to ask to what extent they can differ. For instance, we know that to each (projective) arrangement $\LA$ there is a function $f(\LA,k)$ for which $p_{\mathbb P}(\LA,k)= \lceil p_{\A}(\Aff(\LA),k)+f(\LA,k)\rceil_P$ and so we can then consider what is the minimum data (i.e., number of lines, multiplicities, \dots) required to determine $f$. Specifically, is $f$ bounded a constant function? By considering examples of arrangements with a (relatively) small number of lines, we arrive at the (perhaps unrealistically optimistic) conjecture on the exact relation between the smooth spectra. 

\begin{conjecture}
   For a projective arrangement $\LA$, $p_{\mathbb P}(\LA,k)\leq \lceil p_{\A}({\rm Aff}(\LA),k)+1\rceil_P$. That is, $f$ can be defined to take values only in $\{0,1\}$. 
\end{conjecture}

Lastly, we return to Remark~\ref{rem: naive lower bounds affine} in an attempt to provide an analogous result and consider projective sub-arrangements. Now, not only will a surjection $\phi:H_1(\cptwo\setminus\LA)\rightarrow(\Z/p\Z)^k$ not necessarily define a surjection $\phi':H_1(\cptwo\setminus\LA')\rightarrow(\Z/p\Z)^k$, but since there is no requirement that the meridians around lines in $\LA'$ will have images with trivial sum, we cannot even use $\phi$ to define a related homomorphism with domain $H_1(\cptwo\setminus\LA')$. Nonetheless, we can at least combine Theorem~\ref{theorem: aff proj compared} with Remark~\ref{rem: naive lower bounds affine} to achieve bounds of the form
$$p_{\A}(\Aff( \LA'),2)\leq p_{\mathbb P}(\LA,2).$$

\section{Relation to branched covers}\label{section: branched covers}

The goal of this section is to justify a choice of generalization for anti-admissibility for any finite group. In doing so, this will also allow us to define what it means for a (finite) local system to be anti-admissible. 

First, recalling that unbranched $G$-covers of a space $Z$ are in correspondence with surjective homomorphisms $\pi_1(Z)\rightarrow G$, any of the surjections considered above will correspond to unbranched covers of the complement $Z=Y\setminus\LA$ by factoring through $H_1(Y\setminus\LA)$ via the Hurewicz map. Moreover, by a well known extension theorem of Grauert and Remmert (cf.~\cite{GraRem58}) it is possible to ``refill" the complement in the cover and obtain covers $X\rightarrow Y$ branched over the distinguished curve $\mathcal \LA$ (this is also true more generally for unions of curves in complex surfaces). As a consequence, in the context of local systems on line arrangements ($\phi:\pi_1(Y\setminus \LA)\rightarrow GL(n,\C)$), they are naturally closely related to branched covers with covering group $\phi(\pi_1(Y\setminus\LA))$. 

However, before we fully expand on this relation, we shall briefly review some facts surrounding branched covers (often specialized to the case where the branch set is a line arrangement). The primary reference for such covers  (at least in the abelian case) is Hironaka's monograph \cite{hiro93}, and in line with the previous two sections, after discussing some general concepts we will further specialize to the case when the covering group is $G=(\Z/p\Z)^k$ for primes $p$ and $k\geq 2$. While we will focus on the case when $Y=\cptwo$, Hironaka's results hold more generally when $Y$ is any irreducible and complex projective variety.

If $\rho:X\rightarrow Y$ is a branched covering (with branch set $B\subset Y$) and $\sigma:\widetilde{Y}\rightarrow Y$ is a birational morphism, then the \textit{pullback branched covering} $\widehat\rho:\widetilde{X}\rightarrow\widetilde{Y}$ (cf.~Definition~I.1.3 in \cite{hiro93})  is the minimal branched covering of $\widetilde{Y}$ making the diagram commute:
$$
\begin{tikzcd}
\widetilde{X} \arrow[r, "\widehat\sigma"] \arrow[d, "\widehat\rho"'] & X \arrow[d, "\rho"] \\
\widetilde{Y} \arrow[r, "\sigma"]                                    & Y                  
\end{tikzcd}
$$

We note that by definition, the branch locus of $\widehat\rho$ must be contained in $\sigma^{-1}(B)=\sigma^{-1}(\LA)$ while also containing the proper transform of each line $L_i\in\LA$ (denoted $\overline{L_i}$). Therefore, considering $\sigma$ as a composition of blow-ups, we see that the branch set of $\widehat\rho$ is a union of the proper transforms and some (possibly all, possibly none) of the exceptional spheres.

Furthermore, as an arbitrary cover of $Y$ branched over $\LA$ will not be smooth if $\textrm{Sing}(\LA)$ contains any point of multiplicity $r\geq 3,$ we are provided at least one natural choice of morphism to consider. We will denote by $\beta:\widetilde{Y}_{\LA}\rightarrow Y$, the (simultaneous) blow-up of $Y$ at all intersection points of $\LA$ with multiplicity $r\geq 3$. In this case, the meridian homology class (in $H_1(\widetilde{Y}_{\LA}\setminus \beta^{-1}(\LA))$) around an exceptional sphere $E=\beta^{-1}(p_{i_1,i_2,\dots,i_r})$ will be denoted by $\mu_E=\varepsilon_{i_1,i_2,\dots,i_r}$ and will satisfy the relation 
$$\varepsilon_{i_1,i_2,\dots,i_r}=\sum_{j=1}^r\beta^{-1}_*(\mu_{i_j}),$$
where $\beta_*$ is the isomorphism $H_1(\widetilde{Y}_{\LA}\setminus \beta^{-1}(\LA))\rightarrow H_1(Y\setminus \LA)$. It follows that the pullback branched covering in the abelian case (when $\rho$ is defined by $\phi:H_1(Y\setminus \LA)\rightarrow G$) will correspond to the homomorphism $\widetilde{\phi}=\phi\circ \beta_*$. Moreover, since $\widetilde{Y}_{\LA}\setminus \beta^{-1}(\LA)\cong Y\setminus \LA$ is a diffeomorphism, if we drop the abelian condition and consider a non-abelian covering defined by $\pi_1(Y\setminus \LA)\rightarrow G$, the pullback covering will still be induced by a composition $\widetilde{\phi}=\phi\circ \beta_*$, except now (by abusing notation) using the isomorphism $\beta_*:\pi_1(\widetilde{Y}_{\LA}\setminus \beta^{-1}(\LA))\rightarrow\pi_1(Y\setminus \LA)$.

\begin{remark}\label{rem: hirzebruch minimality}
    Even when considering $\sigma=\beta$, the pullback covering $\widetilde X$ may still not be smooth. In fact, even when it is smooth, it may also not be a minimal resolution of $X$. That said, there are some notable examples. For instance, in \cite{hir83}, Hirzebruch studied covers branched over arrangements $\LA\subset Y=\cptwo$ consisting of $n$ lines with covering group $G=(\Z/p\Z)^{n-1}$, defined by certain Kummer extensions of function fields. In doing so, they showed that the pullback branched covering (along $\beta$) is always smooth and is often minimal. Furthermore, explicit descriptions of the non-minimal covers were provided in these exceptional cases.  The relation between minimality and covers branched over line arrangements was also considered in \cite{kul04} in various examples. 
\end{remark}

With the understanding that pullback coverings may not be smooth, and that in general, a birational morphism $\sigma$ yielding a smooth cover must at least blow-up higher order intersection points (admit a decomposition $\sigma'\circ \beta$), we arrive at the following definition.

\begin{definition}\label{def: minimally smoothable}
   Let $G$ be any finite group. We say a $G$ cover $X\rightarrow Y$ branched over a line arrangement $\LA$ is \textit{minimally smoothable} if the pullback $G$-cover $\widetilde{X}\rightarrow\widetilde{Y}_{\LA}$ is smooth. Furthermore, if the branch locus of $\widehat\rho$ is exactly the total transform $\beta^{-1}(\LA)$, we will say the cover is \emph{totally} minimally smoothable.  
\end{definition}

It is known under what conditions finite abelian covers branched over arrangements of (smooth, transversely intersecting) curves in a complex projective variety will be smooth complex surfaces. This condition is summarized in the following lemma and will be referred to as the \textit{smoothness criterion}. 

\begin{lemma}\label{lemma: smoothness criterion}
{\rm (cf.~Proposition~I.4.1 in \cite{hiro93})}
An abelian branched covering $X\rightarrow Y$ defined by a surjection $\phi:H_1(Y\setminus B)\rightarrow G$ is smooth if and only if whenever two (distinct) curves  $C_1,C_2\subset B$ intersect, the subgroups $\langle\phi(\mu_{C_1})\rangle$ and $\langle\phi(\mu_{C_2})\rangle$ have trivial intersection (where as always, $\mu_{C_i}$ denotes the meridian homology class around $C_i$).
\end{lemma}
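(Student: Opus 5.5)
The plan is to reduce the statement to a local computation near each point of $B$. By the Grauert--Remmert extension theorem recalled above, the cover $\rho:X\rightarrow Y$ obtained from $\phi$ is the unique normal branched cover extending the unbranched cover of $Y\setminus B$. Normality is local and the covering is finite, so for each $y\in B$ the germ of $X$ over a small ball $U\ni y$ is a disjoint union of copies of the normal cover of $U$ branched over $U\cap B$. That cover is determined by the local monodromy map $\pi_1(U\setminus B)\rightarrow H_1(Y\setminus B)\xrightarrow{\phi} G$, and its covering group is the image $H_y$. Smoothness of $X$ is therefore equivalent to smoothness of each of these local models, and the lemma amounts to identifying exactly when they are smooth. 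There are three kinds of points: points of $Y\setminus B$, smooth points of $B$, and transverse double points $C_1\cap C_2$.

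\textbf{Unbranched and smooth points.} Over $Y\setminus B$ the map is an unramified cover of a smooth surface, so $X$ is smooth there. If $y$ lies on exactly one curve $C$, choose coordinates with $B=\{x=0\}$ locally. Then $\pi_1(U\setminus B)\cong\Z$ is generated by $\mu_C$, and $H_y=\langle\phi(\mu_C)\rangle$ is cyclic of order $a$. The normal connected cover is $(u,v)\mapsto(u^a,v)$, which is smooth. So no condition arises at smooth points of $B$.

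\textbf{Transverse double points.} If $y=C_1\cap C_2$, choose coordinates with $B=\{xy=0\}$, so that $\pi_1(U\setminus B)\cong\Z^2$ is generated by $\mu_{C_1}$ and $\mu_{C_2}$. Let $a$ and $b$ be the orders of $\phi(\mu_{C_1})$ and $\phi(\mu_{C_2})$, and let $H=\langle\phi(\mu_{C_1}),\phi(\mu_{C_2})\rangle$. The map $\Z^2\rightarrow H$ factors through $\Z/a\times\Z/b$, which is the covering group of the smooth cover $q:(u,v)\mapsto(u^a,v^b)$. Let $K$ be the kernel of $\Z/a\times\Z/b\rightarrow H$. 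Then $C^2/K$ (with $K$ acting diagonally by roots of unity) is normal and realises the required monodromy, so by uniqueness it is the local model.

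The remaining task, which I expect to be the main step, is to decide when $\C^2/K$ is smooth. First, $K$ contains no pseudo-reflections. The pseudo-reflections of the diagonal group $\Z/a\times\Z/b$ are the nontrivial elements of $\Z/a\times 0$ or $0\times\Z/b$. The factor $\Z/a\times0$ maps injectively onto $\langle\phi(\mu_{C_1})\rangle$, so $K\cap(\Z/a\times 0)=0$, and likewise for the other factor. For a finite small subgroup $K\subset GL(2,\C)$, Chevalley--Shephard--Todd (or Prill) gives that $\C^2/K$ is smooth at the origin if and only if $K$ is trivial. Hence the local model is smooth iff $K=0$, i.e. iff $|H|=ab$. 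This holds iff the natural surjection $\langle\phi(\mu_{C_1})\rangle\times\langle\phi(\mu_{C_2})\rangle\rightarrow H$ is injective, which is exactly the condition $\langle\phi(\mu_{C_1})\rangle\cap\langle\phi(\mu_{C_2})\rangle=\{0\}$.

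Combining the three cases proves both directions. The one subtlety to check carefully is that the local monodromy really is generated by the global meridian classes; this is immediate because the local meridians map to $\mu_{C_i}$ under inclusion.
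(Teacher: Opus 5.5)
Your proof is correct. The paper does not prove this lemma itself; it imports it from Hironaka's Proposition~I.4.1. So the comparison is between a black-box citation and your self-contained local argument, which is the standard one. You reduce, via normality and uniqueness of the Grauert--Remmert extension, to local covers over small balls. At a node you identify the local model as $q^{-1}(U)/K$ with $K=\ker(\Z/a\times\Z/b\to H)$. You note that $K$ contains no pseudo-reflections because each coordinate factor injects into $H$. You then conclude that the model is smooth iff $K=0$, iff $|H|=ab$, iff $\langle\phi(\mu_{C_1})\rangle\cap\langle\phi(\mu_{C_2})\rangle=\{0\}$.

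What your version gains over the citation is that it makes explicit the hypotheses the statement leaves implicit:
\begin{itemize}
\item $Y$ is a smooth surface;
\item $X$ is the normal extension of the unbranched cover;
\item $B$ is a normal crossing divisor with smooth irreducible components, so at most two components pass through any point and each $\mu_{C_i}$ is well defined in $H_1(Y\setminus B)$.
\end{itemize}
These are exactly the conditions met in the paper's application, namely the proper transforms $\overline{L_i}$ together with the exceptional curves of $\beta$ in $\widetilde{Y}_{\LA}$.

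For the last step you can avoid Chevalley--Shephard--Todd. In dimension two, a nontrivial element of a small diagonal group has no eigenvalue $1$, so it fixes only the origin. Hence $K$ acts freely on $S^3$, and the link of $\C^2/K$ is $S^3/K$, whose fundamental group is $K$. That group must be trivial if the image of the origin is a smooth point. One minor typo: $C^2/K$ should read $\C^2/K$.
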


When $G=(\Z/p\Z)^k$ and $B=\LA\subset Y$ is a line arrangement, the smoothness criterion, when applied to a pullback branched covering, can be stated as the following result, a proof of which can be found in \cite{kul04}.

\begin{lemma}\label{lemma: kulikov smoothness}
{\rm (cf.~Lemmas~1.2 and 1.4 in \cite{kul04})}
Let $G=(\Z/p\Z)^k$ and $\widehat\rho: \widetilde{X}\to \widetilde{Y}_{\LA}$ be the pullback cover defined by a surjective homomorphism $\phi:H_1(Y\setminus\LA)\rightarrow G$. Then the complex surface $\widetilde{X}$\/ is smooth if and only if the following two conditions hold.  
\begin{itemize}
\item[(i)]  For each double point\/ $p_{i_1,i_2}=L_{i_1}\cup L_{i_2}$, we have  $\langle\phi(\mu_{i_1})\rangle\cap\langle\phi(\mu_{i_2})\rangle= \{0\}$. 

\item[(ii)] For each $r$-fold intersection point\/ 
$p_{i_1,i_2,\ldots,i_r}=L_{i_1}\cap L_{i_2} \cap \cdots \cap L_{i_r}$ with $r\geq 3$, we have  
$\langle\phi(\varepsilon_{i_1,i_2,\ldots,i_r})\rangle\cap\langle\phi(\mu_{i_j})\rangle = \{0\}$ for all
$j\in\{1,2,\dots, r\}$.  \hfill \qed
\end{itemize}
\end{lemma}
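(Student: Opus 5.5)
The plan is to apply the smoothness criterion (Lemma~\ref{lemma: smoothness criterion}) to the pullback cover $\widehat\rho:\widetilde X\to\widetilde Y_{\LA}$. This cover is defined by $\widetilde\phi=\phi\circ\beta_*$, so the branch data must be identified on $\widetilde Y_{\LA}$.

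The divisor $\beta^{-1}(\LA)$ consists of two kinds of curves: the proper transforms $\overline{L_i}$ and the exceptional spheres $E$ over the points of multiplicity $r\geq 3$. These are smooth curves meeting transversally. The meridian of $\overline{L_i}$ maps to $\phi(\mu_i)$. The meridian of $E=\beta^{-1}(p_{i_1,\dots,i_r})$ maps to $\phi(\varepsilon_{i_1,\dots,i_r})=\sum_j\phi(\mu_{i_j})$, by the relation recorded before Remark~\ref{rem: hirzebruch minimality}. If this sum is $0$, then $E$ is simply not a branch curve. That case is harmless, because the trivial subgroup meets everything trivially; the condition in (ii) then holds automatically, and I should note this explicitly.

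Next I list which pairs of curves intersect in $\widetilde Y_{\LA}$. First, $\overline{L_{i_1}}$ and $\overline{L_{i_2}}$ meet exactly when $p_{i_1,i_2}$ is a double point of $\LA$, since blowing up separates all lines through a point of multiplicity at least $3$. Second, $\overline{L_{i_j}}$ meets $E$ exactly when $L_{i_j}$ passes through the blown-up point, and it does so in one transverse point. Third, distinct exceptional spheres are disjoint, because the blown-up points are distinct. Applying Lemma~\ref{lemma: smoothness criterion} to these intersecting pairs gives precisely conditions (i) and (ii).

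The one genuine subtlety is the hypothesis of Lemma~\ref{lemma: smoothness criterion}. It is stated for a covering defined by a surjection $H_1(\widetilde Y_{\LA}\setminus\beta^{-1}(\LA))\to G$ with branch curves forming a normal crossing configuration, and I need this to apply to the pullback cover. Surjectivity is preserved because $\beta_*$ is an isomorphism. The pullback cover is the normalization of $\widetilde Y_{\LA}$ in the function field of $X$, by minimality, so it coincides with the abelian cover that Hironaka associates to $\widetilde\phi$. The main obstacle, then, is matching these two descriptions of the cover and checking transversality at the points where $E$ meets $\overline{L_{i_j}}$; I expect both to be routine. Everything else is bookkeeping.
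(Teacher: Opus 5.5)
Your proposal is correct and follows the route the paper indicates. The paper presents this lemma as the smoothness criterion (Lemma~\ref{lemma: smoothness criterion}) applied to the pullback cover on $\widetilde{Y}_{\LA}$ defined by $\widetilde\phi=\phi\circ\beta_*$, and defers the details to Kulikov; your proposal supplies those details, namely that the total transform has normal crossings, that the only intersecting pairs are $\overline{L_{i_1}},\overline{L_{i_2}}$ at double points and $\overline{L_{i_j}},E$, and that $\widetilde\phi(\mu_E)=\sum_j\phi(\mu_{i_j})$, together with your correct observation that the case $\phi(\varepsilon_{i_1,\dots,i_r})=0$, where $E$ is not a branch curve, is harmless.
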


Importantly, we can now discuss the connection between $G$-anti-admissibility and smooth branched coverings.

\begin{proposition}\label{prop: main correspondence}
    Let $G=(\Z/p\Z)^k$ for some $k\geq 2$ and $p$ a prime. A line arrangement $\LA$ in $\cptwo$ is $G$-anti-admissible if and only if there exists a $G$ cover of $\cptwo$ branched over $\LA$ which is totally minimally smoothable.
\end{proposition}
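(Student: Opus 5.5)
The plan is to translate both sides of the equivalence into conditions on the images $\phi(\mu_i)$ and $\phi(\varepsilon_{i_1,\dots,i_r})$. Then we match them using the fact, recalled in Example~\ref{example: general position k=2}, that two nonzero elements of $(\Z/p\Z)^k$ are linearly independent if and only if the cyclic subgroups they generate meet trivially. Every abelian $G$-cover of $\cptwo$ branched over $\LA$ comes from a surjection $\phi:H_1(\cptwo\setminus\LA)\to G$. Its pullback along $\beta$ is the cover $\widehat\rho:\widetilde X\to\widetilde Y_{\LA}$ defined by $\widetilde\phi=\phi\circ\beta_*$. So it suffices to show the following: $\phi$ is $G$-anti-admissible if and only if (a) $\widetilde X$ is smooth, and (b) the branch locus of $\widehat\rho$ is all of $\beta^{-1}(\LA)$.

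First I identify the branch locus of $\widehat\rho$. The proper transforms $\overline{L_i}$ lie in the branch locus exactly when $\phi(\mu_i)\neq 0$. An exceptional sphere $E$ over $p_{i_1,\dots,i_r}$ lies in the branch locus exactly when its meridian has nonzero image. Using the relation $\varepsilon_{i_1,\dots,i_r}=\sum_j\beta_*^{-1}(\mu_{i_j})$, that image is $\sum_j\phi(\mu_{i_j})$. Hence ``totally'' means two things: every $\phi(\mu_i)\neq 0$, which is condition (1) of Definition~\ref{def: elementary anti-admissible}, and every sum $\sum_j\phi(\mu_{i_j})\neq 0$ at points of multiplicity $r\geq 3$.

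Next I apply Lemma~\ref{lemma: kulikov smoothness} to get smoothness.

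At a double point $p=L_{i_1}\cap L_{i_2}$, condition (i) says that $\langle\phi(\mu_{i_1})\rangle\cap\langle\phi(\mu_{i_2})\rangle=0$. When both elements are nonzero, this is linear independence of $\phi(\mu_{i_1})$ and $\phi(\mu_{i_2})$. By Remark~\ref{rem: double points LI}, that is exactly the anti-admissibility condition at $p$, since the pair $\{\phi(\mu_{i_1}),\phi(\mu_{i_1})+\phi(\mu_{i_2})\}$ is independent if and only if $\{\phi(\mu_{i_1}),\phi(\mu_{i_2})\}$ is.

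At a point of multiplicity $r\geq 3$, condition (ii) together with nonvanishing of $\phi(\varepsilon)$ and $\phi(\mu_{i_j})$ is exactly pairwise linear independence of $\phi(\mu_{i_j})$ and $\sum_m\phi(\mu_{i_m})$.

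Conversely, suppose $\phi$ is anti-admissible. Linear independence forces both members of each pair to be nonzero. This gives the totality condition and conditions (i)–(ii) at once, so the cover defined by $\phi$ is totally minimally smoothable.

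For the forward direction, I start from a totally minimally smoothable cover and take its defining surjection $\phi$. Totality gives all the nonvanishing statements, and smoothness via Lemma~\ref{lemma: kulikov smoothness} then upgrades the trivial-intersection conditions to linear independence. Surjectivity is part of the data in both directions.

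The main obstacle is a subtlety in Lemma~\ref{lemma: kulikov smoothness}. It is stated for the pullback cover with the induced homomorphism, and its condition (ii) only has content when $\phi(\varepsilon)$ and $\phi(\mu_{i_j})$ are nonzero. If some image vanishes, the trivial-intersection condition holds vacuously, but the corresponding curve drops out of the branch locus. I need to check carefully that this degenerate case is excluded precisely by the ``totally'' hypothesis. That is why the equivalence is with totally minimally smoothable covers rather than merely minimally smoothable ones.

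I will also note that the cover in question is Galois with group $G$ abelian, so it is determined by a homomorphism factoring through $H_1$. This justifies working entirely with $\phi$ on $H_1(\cptwo\setminus\LA)$.
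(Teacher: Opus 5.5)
Your proposal is correct and follows essentially the same route as the paper. Both arguments first identify the branch locus of the pullback cover via $\varepsilon_{i_1,\dots,i_r}=\sum_j\beta_*^{-1}(\mu_{i_j})$, so that totality means every $\phi(\mu_i)\neq 0$ and every sum $\sum_j\phi(\mu_{i_j})\neq 0$. They then apply Lemma~\ref{lemma: kulikov smoothness} together with the fact that nonzero elements of $(\Z/p\Z)^k$ are linearly independent exactly when the cyclic subgroups they generate meet trivially. Your explicit treatment of double points and of the vanishing cases is a more careful write-up of the step the paper calls ``straightforward.''
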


\begin{proof}

    First, it was argued in \cite{hiro93} that a curve $C$ is in the branch locus of a cover defined by $\phi:H_1(Y\setminus B)\rightarrow G$ if and only if $\phi(\mu_C)\neq 0$ and so the branch locus of a pullback covering (induced by $\widetilde{\phi}=\phi\circ \beta_*$) will consist of $\overline{\LA}:=\overline{L_1}\cup\dots\cup\overline{L_n}$ and all exceptional spheres $E$ for which  
    $$\widetilde{\phi}(\varepsilon_{i_1,i_2,\dots,i_r})=\phi\circ\beta_*\left(\sum_{j=1}^r\beta^{-1}_*(\mu_{i_j})\right)=\sum_{j=1}^r\phi(\mu_{i_j})\neq 0.$$

    That is, a cover defined by $\phi$ is totally minimally smoothable if and only if it is minimally smoothable and $\sum_{j=1}^r\phi(\mu_{i_j})\neq 0$ for all points $p_{i_1,i_2,\dots,i_r}\in\textrm{Sing}(\LA)$ with multiplicity $r\geq 3$.

    Now, suppose $\LA$ is $G$-anti-admissible, so that a surjection $\phi:H_1(Y\setminus \LA)\rightarrow G$ satisfies Definition~\ref{def: elementary anti-admissible}. Property (1) implies that $\phi$ defines a cover of $\cptwo$ branched exactly over $\LA$. Furthermore, property (2), when combined with the observation in Example~\ref{example: general position k=2} implies the conditions in Lemma~\ref{lemma: kulikov smoothness} are satisfied. Therefore $\phi$ is minimally smoothable. Lastly, linear independence (2) also implies $\sum_{j=1}^r\phi(\mu_{i_j})\neq 0$ for all points $p_{i_1,i_2,\dots,i_r}\in\textrm{Sing}(\LA)$, so $\phi$ is totally minimally smoothable.
    
    Conversely, if $\phi:H_1(Y\setminus \LA)\rightarrow G$ defines a totally minimally smoothable cover, then $\phi$ is necessarily a surjection for which $\phi(\mu_i)\neq 0$ for all lines $L_i\in\LA$. It is straightforward to see that minimally smoothable implies either linear independence of pairs of the form
    $$\{\phi(\mu_{i_k}),\sum_{j\in \{i_1,i_2,\ldots,i_r\}}\phi(\mu_j)\}$$ 
    or that for some point $p_{i_1,i_2,\dots,i_r}\in\textrm{Sing}(\LA)$, 
    $$\sum_{j=1}^r\phi(\mu_{i_j})=0.$$
    The latter case contradicts the totality of the minimally smoothable cover, and so the result follows.
\end{proof}

By considering the above correspondence, in addition to the fact that Definition~\ref{def: minimally smoothable} is well defined for any finite group, we are provided with a natural definition for generalized $G$-anti-admissibility. Which will correspond to the relevant definitions above when $G=(\Z/p\Z)^k$.

\begin{definition}\label{def: G anti admissibility}
    Let $G$ be a finite group. A projective line arrangement $\LA\subset\cptwo$ is $G$\emph{-anti-admissible} if there exists a covering of $\cptwo$ that is branched exactly over $\LA$ that is totally minimally smoothable. 

    Moreover, if $\phi:\pi_1(\cptwo\setminus\LA)\rightarrow GL(k,\C)$ is a rank $k$ local system on $\LA$ with finite image, then $\phi$ is anti-admissible if the induced $\phi(\pi_1(Y\setminus\LA))$ cover is totally minimally smoothable. 
\end{definition}

\begin{remark}
    Recalling Definition~\ref{def: admissible}, we see that the only time a minimally smoothable cover can correspond to a $(\Z/p\Z)^k$-\emph{admissible} surjection is when no exceptional sphere is in the branch locus of the pullback covering. That is, the branch locus is $\overline\LA$. Furthermore, a surjection might yield a minimally smoothable covering yet be neither admissible nor anti-admissible. We feel encouraged to call any surjection coming from a minimally smoothable cover \emph{smoothly admissible}, and by extension, could consider smoothly admissible arrangements and local systems. While such surjections are not a focal point of this paper, we would nonetheless be interested in future research in this area as they are still a method for construction interesting smooth manifolds. For example, the applications of such surjections has been previously used to find examples of complex surfaces of general type with specified geometric genus \cite{kul04}.
\end{remark}

\begin{example}
    The coverings of Hirzebruch \cite{hir83} which were briefly discussed in Remark~\ref{rem: hirzebruch minimality} were in fact more generally covers with covering group $(\Z/m\Z)^{n-1}$ for any integer $m\geq 2$. Consequently, for any line arrangement (with $n$ lines), it is $(\Z/m\Z)^{n-1}$-anti-admissible for all integers $m\geq 2$.
\end{example}

\begin{example}
    In \S 4.4 of \cite{harristhesis}, $G$-anti-admissibility for non-cyclic groups whose order consists of multiple distinct prime factors was considered (more specifically, minimally smoothably covers were). Such groups included were $(\Z/p\Z)^k\oplus (\Z/q\Z)^{n-1-k}$ and $(\Z/p\Z)\oplus (\Z/q\Z)^{n-1-k}$ for $2\leq k\leq n-3$ to name a few, and some of the arrangements considered were generic and near pencil arrangements.
\end{example}

Since minimal smoothability for an abelian cover can be determined by only considering the incidence matrix of $\LA$ (a generalization of the observation for $(\Z/p\Z)^k$-anti-admissibility above), we can ask if this is still true in the non-abelian case. That is, we would also be interested in knowing whether there are arrangements with equivalent incidence matrices that can be distinguished by (non-abelian) anti-admissibility? As these are more closely tied to the fundamental group of the complement, we would expect the answer to be yes. That said, regardless of the answer, we will either possibly have an alternative method for distinguishing line arrangements, or it will imply that the smoothness of a large class of non-abelian branched covers is completely determined by the combinatorial data of the arrangement. Either of these cases can lead to advancements in at least one of these respective fields.

\begin{remark}
    We also highlight that Definition~\ref{def: elementary anti-admissible} and Definition~\ref{def: G anti admissibility} can also both be used when discussing arrangements of (non-hyperplane) curves in complex projective varieties. We hope to return to this idea in future work to investigate how the various generalizations of these definitions compare. In particular, we are interesting in whether or not there is an analogue for Proposition~\ref{prop: main correspondence} in this case. We highlight that research into how the existence of non-abelian branched coverings provides restrictions on the (non-hyperplane) curves in branching set has also been of interest for essentially just as long as the linear case (at least for dihedral groups \cite{tokunaga94},\cite{tokunaga00}), and we would be interested in learning if any of the ideas we have provided above may be useful to those currently considering such questions. 
\end{remark}

Another interesting question is to ask if there is any possible generalization of Theorem~\ref{theorem: thesis bound}. 

\begin{question}
    Given a line arrangement $\LA$, for what (finite) groups $G$ of rank $2\leq r\leq|\LA|$ is $\LA$ not $G$-anti-admissible? Is there any class of arrangements for which some finiteness result holds?
\end{question}

\begin{remark}
    We end by noting that general $G$-anti-admissibility has only been defined for the case when both the base space and the cover are compact 4 dimensional manifolds.  In principle however, a modified definition for the cases leading to non-compact covers is not impossible, although it may require more care. For instance, non-compact 4-manifolds are smoothable by the work of Freedman and Quinn \cite{FreQuin}, and so if a pullback covering is a topological 4-manifold, smoothability will follow, which may lead to complicated (and hopefully interesting) relations between the covering space $\widetilde{X}$, the induced covering map $\widehat\rho$, and the various algebraic structures on $\pi_1$ or $H_1$. These considerations become more involved when you further allow for the image of $\phi$ to be infinite. For these reasons, while we have only considered the compact cases above, we would be interested in returning to these intricacies in a future work.
\end{remark}

\bibliographystyle{abbrv}
\bibliography{references}

\end{document}